\documentclass{article}
\input{glyphtounicode}
\usepackage{amsmath,amssymb,amsthm}
\usepackage{array}
\usepackage{booktabs}
\usepackage[font=small,labelfont=bf]{caption}
\usepackage{enumitem}
\usepackage{xcolor}
\usepackage{tikz}
\usetikzlibrary{decorations.pathreplacing,arrows.meta}
\usepackage{microtype}
\usepackage{url}
\usepackage[hidelinks]{hyperref}

\setlist{topsep=4pt,itemsep=2pt}
\setlist[itemize]{label=$\bullet$}

\definecolor{cblue}{RGB}{0,60,230}
\definecolor{cred}{RGB}{220,0,0}
\definecolor{cgreen}{RGB}{0,150,0}
\definecolor{cmag}{RGB}{200,0,200}
\definecolor{corange}{RGB}{230,120,0}
\newcommand{\hl}[2]{\textcolor{#1}{\mathbf{#2}}}

\newtheorem{theorem}{Theorem}
\newtheorem{lemma}{Lemma}
\theoremstyle{definition}
\newtheorem*{definition}{Definition}
\newtheorem*{example}{Example}

\newcommand{\op}[1]{\mathrm{#1}}

\title{Making Every Number from 1 to $N$\\ Under a Fixed Cycle of $+$, $\times$, $-$, $\div$}
\author{Sean Lesmana \and Theodore Tjugiarto}
\date{September 2026}

\begin{document}

\maketitle

\begin{abstract}
Start with the number 2. At each move, combine two numbers already made, but the operations must be used in the fixed repeating order $+, \times, -, \div$. We ask for the fewest moves needed to make every integer from 1 to $N$. Since 2 is already one of the numbers we want and each move makes at most one new number, at least $N-1$ moves are needed. We show that $N-1$ moves are also enough for every $N \ge 9$. The usual induction from $n$ to $n+1$ cannot work, because once all of $1, \dots, P$ have been made, a division can only produce a number already made. Instead, once $1, \dots, P$ are complete, we make all of $P+1, \dots, 3P$ as one block; counting shows that 3 is the smallest factor for which a block extension of this kind can work. We then adjust the last few moves to reach every other $N$. For $9 \le N \le 33$ we give explicit sequences, found by computer search.
\end{abstract}

\section{Introduction}

Start with the number 2. At each move we combine two numbers we already have to make a new one. Numbers can be reused, so $2+2=4$ and then $2\times4=8$ is allowed. The operations, however, must follow the fixed order
\[
  +,\ \times,\ -,\ \div,\ +,\ \times,\ -,\ \div,\ \dots
\]
and a division is only allowed when the answer is a positive integer.

How few moves does it take to make all of $1, 2, \dots, N$? The challenge with the fixed order is that the cycle sometimes calls for a division when we would rather add, and a division need not even yield a whole number. We show that for $N \ge 9$ the fixed order does not increase the smallest number of moves needed. The reason is that the order fixes which operation we use, but not which numbers we use it on. Move 6 has to be a multiplication, and we may multiply any two numbers made so far, which turns out to be freedom enough. The problem was posed by the first author. Our main result is the following.

\begin{theorem}\label{thm:main}
For every $N \ge 9$, starting from 2 with the cycle $+, \times, -, \div$, all of $1, \dots, N$ can be made in exactly $N-1$ moves, and no fewer is possible.
\end{theorem}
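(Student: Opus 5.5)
The lower bound is immediate. We start holding only the number $2$, and each move adds at most one new number to our collection. To hold all of $1,\dots,N$ we therefore need at least $N-1$ moves. All the work is in the upper bound. The plan is a strong induction built on a \emph{tripling block}: assume we have a sequence of exactly $P-1$ moves that produces precisely $\{1,\dots,P\}$ with no wasted moves. We then continue it with exactly $2P$ further moves producing precisely $\{P+1,\dots,3P\}$. For $9\le N\le 33$ we take the explicit computer-found sequences as the base, and for larger $N$ we pick $P$ with $N$ in the range reachable from $P$.

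\textbf{The block.} Among the $2P$ new moves, the cycle forces about $P/2$ divisions, exact up to the phase offset determined by $(P-1)\bmod 4$. A division can only create a new number if its dividend exceeds $P$. The natural choice is to divide an already-made even number $2m\in(2P,3P]$ by $2$, which yields $m\in(P,\tfrac32P]$. There are exactly $P/2$ such targets, which matches the number of divisions; this counting is why $3$ is the right factor. The additions, multiplications and subtractions must then cover the remaining numbers: all of $(\tfrac32P,3P]$, together with the evens of $(2P,3P]$, which have to appear before their halves are taken. Concretely, I would take the evens in $(2P,3P]$ as products $a\cdot b$ with $a,b\le P$, for instance $2\cdot k$ for $k\in(P,\tfrac32P]$ is unavailable, so use $3\cdot k$, $4\cdot k$, \dots, or other factorizations. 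The remaining numbers would come from sums $x+y$ of numbers already held and differences $x-y$ with $x$ large and $y$ small. Since sums and differences can hit essentially any target below the current maximum, the real constraint is the multiplications: each product must be a not-yet-made number in range. I would schedule the block in rounds of four. In each round the $\times$ step makes an even $2m$ in $(2P,3P]$ or some other convenient product, the $\div$ step halves an even made earlier, and the $+$ and $-$ steps fill in the rest of $(\tfrac32P,3P]$ in increasing order. This keeps the current maximum ahead of every target.

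\textbf{General $N$.} For $P<N<3P$ we cannot simply stop the block early, because a truncated block may leave divisions whose dividends were never created. It can also leave holes in $(P,N]$. Instead I would prove a slightly stronger block lemma: for each $N\in(P,3P]$, the $N-P$ moves can be ordered so that precisely $\{P+1,\dots,N\}$ appears. For this, use the ``halving'' divisions only for targets $m\le N$ with $2m\le N$ where possible. When $N<2P$ there are no dividends above $2P$, so the divisions must be fed by dividends in $(N/2\cdot 2)$ range, i.e.\ $m$ with $2m\le N$, or by quotients $3m/3$-type tricks. A fallback is to divide a freshly made number $x\le N$ by a divisor $d\ge2$ with $x/d\in(P,N]$. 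This requires $N\ge 2P+2$ roughly, so I would instead choose $P$ in the induction as the largest base value with $2P<N\le 3P$. The ranges $(2P,3P]$ for consecutive admissible $P$ cover all $N\ge 34$ once the base covers $P\in\{11,\dots,33\}$ with every residue of $P-1 \bmod 4$. Within such a partial block, the number of divisions is about $(N-P)/4$, and the available halving targets $(P,N/2]$ number about $(N-2P)/2$. This is enough only when $N$ is near $3P$, so the general case also needs other divisors, for example $x/3$ with $x\in(3P/\!\!\;\cdot)$. Making this count work is delicate.

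\textbf{Main obstacle.} The hard step is the bookkeeping of the truncated block. We must guarantee, for every $N$ and every phase of the cycle, enough valid exact divisions landing on new targets, each with its dividend created earlier, and enough multiplications landing on new targets, while still producing no duplicates. I would first try to fix a uniform schedule for the full block $P\to3P$ for each of the four phases. Then I would handle general $N$ by redoing only the final $O(1)$ rounds, as the abstract suggests (``adjust the last few moves''). This means taking $P$ with $3P$ just above $N$, running a full-block template whose targets are relabeled to $\{P+1,\dots,N\}$, and verifying the last few rounds by a finite case analysis on $N \bmod 4$ and on the phase.
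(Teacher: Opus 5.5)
Your lower bound is fine, but the upper bound is not proved. The tripling block, which carries the whole theorem, is only described by what you ``would'' try, and the concrete parts of your plan fail.

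A block can follow a completed $\{1,\dots,P\}$ only if move $P$ makes a new number. For $P\equiv 3\pmod 4$ that move is a subtraction, with result at most $P-1$; for $P\equiv 0\pmod 4$ it is a division, with result at most $P$. So there is no schedule for ``each of the four phases'', and an induction that takes the largest admissible $P$ regardless of its residue breaks. The paper fixes $P\equiv 2\pmod 4$. Then the $2P$ new moves are exactly $P/2$ rounds $\op{M},\op{S},\op{D},\op{A}$, and their $3P/2$ non-division moves exactly match $|(\tfrac32P,3P]|$. The paper stays in that class by writing $P=3P'+4(a-1)$ with $a\in\{1,2,3\}$, down to the bases $10,14,18,22,26$.

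Its block is explicit. $\op{M}$ makes $C_j=3(P-j)$, and $\op{S}$ and $\op{A}$ make $C_j-1$ and $C_j+1$. $\op{D}$ halves $3P-2j$, which is $C_{2t}$, $C_{2t+1}+1$ or $C_{2t+1}-1$ according as $j=3t,3t+1,3t+2$, so it is already made. In round $0$ the addition makes $H+1$ (with $H=\tfrac32P$) instead of $3P+1$, and rounds $1$ and $2$ swap their divisions. Your heuristic of having $\op{M}$ produce the evens of $(2P,3P]$ cannot be carried out as stated. The budget is exact, so every number in $(P,\tfrac32P]$ must come from a division. For a prime $p$ in that range, the only possible numerator is $2p$. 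Hence $2p$ must exist before $p$ and has no usable factorization.

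More seriously, your routes to $N\ne 3P$ are ruled out by counting. Once $\{1,\dots,P\}$ is complete, every useful division, by $2$, by $3$ or by anything else, lands in $(P,N/2]$. That interval has $\lfloor N/2\rfloor-P$ elements, while the remaining $N-P$ moves contain about $(N-P)/4$ divisions. This forces $N\ge 3P-O(1)$. For $P=4q+2$ and $2P<N<3P$, only $N=3P-2$ passes: already $N=3P-1$ has $2q+1$ divisions but only $2q$ possible quotients. So neither the partial block nor taking ``$3P$ just above $N$'' and relabeling can reach most residue classes of $N$.

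The missing idea is to overshoot. Write $N=3P+c$ with $c\ge 0$. Each extra top number enlarges the pool of division targets by about $\tfrac12$, but the number of divisions by only about $\tfrac14$. Then replace the last one or two rounds of the block by an explicit patch for each $c\in\{1,2,4,\dots,11,15\}$, chosen from $N\bmod 12$. Here $c=15$ stands in for $c=3$, for which the search finds no patch, and each patch line is checked as an identity in $q$.
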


The bound $N \ge 9$ cannot be lowered (Section~\ref{sec:small}).

Related problems have been studied as \emph{addition chains}~\cite{DLS}, where numbers are built only by adding, and \emph{$B$-chains}~\cite{BN}, where each step may use any operation from a fixed set $B$. Here the target is a whole interval rather than one number, and the operation at each step is fixed in advance. Finding shortest chains is hard in general. Downey, Leong and Sethi showed that the addition-sequence problem, of reaching several given numbers as cheaply as possible, is NP-complete~\cite{DLS}. That difficulty comes from targets that are few and arbitrary, so that a short chain must be searched for and its optimality argued afterwards. Here the target is a complete interval, and the counting bound of Section~\ref{sec:lower} settles optimality: a sequence that wastes no move meets it, so it is optimal by construction and no search is needed to prove it. Addition chains are treated at length by Knuth~\cite{Knuth}. A related question is integer complexity, the fewest 1s needed to write $n$ with $+$ and $\times$~\cite{OEIS}, which again fixes one target and leaves the operations free.

The lower bound is Lemma~\ref{lem:lower}. Sections~\ref{sec:fails} and~\ref{sec:timing} explain why the interval has to be completed a block at a time, and when each block can start. Section~\ref{sec:tripling} extends $\{1, \dots, P\}$ to $\{1, \dots, 3P\}$, Sections~\ref{sec:everyP} and~\ref{sec:everyN} adjust the last few moves to reach every $N \ge 34$, and Section~\ref{sec:small} finishes the proof with the small cases.

\section{The Rules}\label{sec:rules}

Here are the rules in one place, so we can point back to them later. The game begins with a single number $s$, the \emph{starting number}.
\begin{enumerate}
  \item At each move, choose two numbers already made (possibly the same one twice).
  \item Apply the operation required by the cycle. A division must give a positive integer.
  \item Add the result to our collection of numbers. Numbers are never used up, so we can reuse them as often as we like.
\end{enumerate}
We write $\op{A}, \op{M}, \op{S}, \op{D}$ for addition, multiplication, subtraction and division, and number the moves $1, 2, 3, \dots$. Formally, the operation $\circ_m \in \{+, \times, -, \div\}$ applied at move $m \in \mathbb{Z}^+$ is governed by the 4-periodic schedule
\[
  \circ_m =
  \begin{cases}
    + & \text{if } m \equiv 1 \pmod 4,\\
    \times & \text{if } m \equiv 2 \pmod 4,\\
    - & \text{if } m \equiv 3 \pmod 4,\\
    \div & \text{if } m \equiv 0 \pmod 4.
  \end{cases}
\]
We call this the cycle AMSD. The lower bound in Section~\ref{sec:lower} holds for any starting number $s \in \{1, \dots, N\}$. All later results use $s = 2$.

\section{Lower Bound and a First Example}\label{sec:lower}

Let's start with the easy direction: the fewest moves we could possibly need. We also give a name to the sequences that get away with exactly that many.

\begin{lemma}[Lower bound]\label{lem:lower}
If $s \in \{1, \dots, N\}$, then making all of $1, \dots, N$ starting from $s$ takes at least $N-1$ moves.
\end{lemma}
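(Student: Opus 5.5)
The plan is a simple counting argument. We track the set of numbers made so far and bound how fast it can grow. Let $C_0 = \{s\}$, and let $C_m$ be the set of numbers in our collection after move $m$. By the rules in Section~\ref{sec:rules}, each move appends exactly one result (a single integer) to the collection. Therefore $C_m \subseteq C_{m-1} \cup \{r_m\}$, where $r_m$ is the result of move $m$, and so $|C_m| \le |C_{m-1}| + 1$. Note that this holds whatever the operation is, and whether or not $r_m$ lands in $\{1,\dots,N\}$. By induction on $m$, we get $|C_m| \le m+1$.

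Next we restrict attention to the target interval. Put $T = \{1,\dots,N\}$ and consider $|C_m \cap T|$. At the start, $C_0 \cap T = \{s\}$, which has size $1$ because $s \in T$ by hypothesis. Each move increases $|C_m \cap T|$ by at most one. It increases by exactly one when $r_m \in T \setminus C_{m-1}$, and otherwise it stays the same. Hence $|C_m \cap T| \le 1 + m$.

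Now suppose that after $k$ moves every element of $T$ has been made. Then $T \subseteq C_k$, so $N = |C_k \cap T| \le 1 + k$, which gives $k \ge N-1$. This is the claimed bound.

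There is no real obstacle here. The only point needing care is that the starting number contributes exactly one element of $T$ for free. That is where the hypothesis $s \in \{1,\dots,N\}$ is used: if $s \notin T$, the same argument would give the stronger bound $k \ge N$. It is also worth remarking that the argument ignores the AMSD schedule entirely, so the bound holds for any cycle or rule set in which each move produces one number. This is why the lemma is stated for general $s$, and it is the reason a sequence attaining $N-1$ must have every move produce a new element of $T$ (no wasted moves). That observation will be used later.
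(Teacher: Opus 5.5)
Your proof is correct and uses the same counting argument as the paper. The starting number $s$ supplies one element of $\{1,\dots,N\}$ and each move adds at most one new element, so $N-1$ moves are required; your version just makes the paper's one-line count explicit by tracking $|C_m \cap T|$ move by move.
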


\begin{proof}
We start with 1 number (the number $s$), so $N-1$ numbers are missing from $\{1, \dots, N\}$. Since each move creates at most 1 new number, reaching $N$ numbers requires at least $N-1$ moves.
\end{proof}

\begin{definition}
A sequence of moves is \emph{perfect} if it makes all of $1, \dots, N$ in exactly $N-1$ moves. In a perfect sequence, every single move must make a previously unconstructed number between 1 and $N$.
\end{definition}

For $N = 9$, starting from 2, a perfect sequence uses $N - 1 = 8$ moves (Table~\ref{tab:n9}).

\begin{table}[ht]
\centering
\begin{tabular}{cccc}
\toprule
Move & Op & Calculation & New number \\
\midrule
1 & A & $2 + 2$ & $\hl{cblue}{4}$ \\
2 & M & $2 \times \hl{cblue}{4}$ & $\hl{cred}{8}$ \\
3 & S & $\hl{cred}{8} - 2$ & $\hl{cgreen}{6}$ \\
4 & D & $\hl{cgreen}{6}/2$ & $\hl{cmag}{3}$ \\
5 & A & $2 + \hl{cmag}{3}$ & $5$ \\
6 & M & $\hl{cmag}{3} \times \hl{cmag}{3}$ & $\hl{corange}{9}$ \\
7 & S & $\hl{corange}{9} - 2$ & $7$ \\
8 & D & $2/2$ & $1$ \\
\bottomrule
\end{tabular}
\caption{A perfect sequence for $N = 9$. A number used again later is printed in bold colour where it is made and where it is reused.}
\label{tab:n9}
\end{table}

Notice that each operation already has its own job here. Multiplication makes the largest numbers (8, 9), subtraction makes the numbers just below them (6, 7), and division halves an even number to get a small one (3, 1). The general construction gives the operations exactly these jobs.

\section{Why the Conventional \texorpdfstring{$n \to n+1$}{n -> n+1} Induction Fails}\label{sec:fails}

The obvious first idea is to grow the interval one number at a time: once we have $\{1, \dots, P\}$, make $P+1$, then $P+2$, and so on. It feels natural, but it breaks the moment a completed interval is followed by a division.

Suppose we have completed $\{1, \dots, P\}$ and the cycle calls for a division $a/b$. Since $a$ and $b$ must come from $\{1, \dots, P\}$:
\begin{itemize}
  \item If $b = 1$, the result is $a/1 = a \le P$, which we already have.
  \item If $b \ge 2$, the result is at most $a/2 \le P/2 \le P$, which we also already have.
\end{itemize}
Either way, division can only give us a number we already have. With $\{1, \dots, 10\}$, for example, the biggest quotient we can make is $10/2 = 5$.

A perfect sequence for $P$ ends on move $P-1$. The next move (move $P$) is a division exactly when $P \equiv 0 \pmod 4$. Hence, in a perfect sequence for any $N > P$ with $P \equiv 0 \pmod 4$, the collection after move $P-1$ can never be exactly $\{1, \dots, P\}$, because move $P$ would then make nothing new. So one-at-a-time growth fails every time $P \equiv 0 \pmod 4$.

So division only helps if we have already built some numbers above $P$ by the time it arrives, because halving those lands back inside the gaps below. From $\{1, \dots, 10\}$, division cannot reach 13, but once 26 has been built, $26/2 = 13$ fills the gap. This is why the interval cannot stay complete after every move. It has to be completed a whole block at a time, with the numbers above $P$ made first, and Section~\ref{sec:factor} works out exactly how big that block has to be.

\section{The Timing Condition}\label{sec:timing}

Before extending $\{1, \dots, P\}$, we need to know which operation the cycle hands us next. Let $P$ be written in the form $P = 4q + 2$, where $q \ge 2$ is an integer that counts how many full 4-move cycles were completed when building $P$.

A perfect sequence for $P$ takes $P - 1$ moves. Substituting $P = 4q + 2$, the final move of the sequence is move number
\[
  P - 1 = (4q + 2) - 1 = 4q + 1 \equiv 1 \pmod 4.
\]
Moves in position $1 \pmod 4$ are additions, so the sequence for $P$ ends on an A, and the very next move, move $P = 4q + 2 \equiv 2 \pmod 4$, is a multiplication (M).

So every extension of $P$ starts on M and runs in 4-move rounds in the repeating order
\[
  \op{M},\ \op{S},\ \op{D},\ \op{A}.
\]
This is just the AMSD cycle read from the multiplication. That is the only job of the condition $P \equiv 2 \pmod 4$: it makes sure every extension opens with a multiplication, which, as we will see, is exactly the move we want first. The base numbers $10, 14, 18, 22$ and $26$ all satisfy $P \equiv 2 \pmod 4$.

\section{Tripling: From \texorpdfstring{$P$}{P} to \texorpdfstring{$3P$}{3P}}\label{sec:tripling}

Fix $P = 4q + 2$ with $P \ge 10$, and assume we have a perfect sequence making $\{1, \dots, P\}$. Two numbers will act as landmarks throughout:
\begin{align}
  \text{midpoint} \quad H &= \tfrac{3}{2}P = \tfrac{3}{2}(4q + 2) = 6q + 3, \label{eq:H}\\
  \text{expanded top} \quad 3P &= 2H = 2(6q + 3) = 12q + 6. \label{eq:top}
\end{align}
Since $P$ is even, $H$ is a whole number. Here is why $H$ matters. The largest number we will ever have is $3P$, and a useful division at best halves it, so no division can ever land above $3P/2 = H$. Think of $H$ as a ceiling for division: everything above it has to be built some other way.

\subsection{Why the expansion factor \texorpdfstring{$k$}{k} is at least three}\label{sec:factor}

Why triple? Why not just double the interval to $2P$, or stretch it by some other factor $k$? The idea is to count how many moves are allowed to build the top part of the new interval, and compare that with how many numbers live there.

The bound below concerns only extensions of this kind, from one completed interval $\{1, \dots, P\}$ to another completed interval $\{1, \dots, kP\}$. It says nothing about perfect sequences that never pass through a completed interval.

Extending $\{1, \dots, P\}$ to $\{1, \dots, kP\}$ requires making $(k-1)P$ new numbers in $m = (k-1)P$ moves. These moves start on M and run in rounds $\op{M}, \op{S}, \op{D}, \op{A}$, so every complete round contains exactly 1 division and 3 non-division moves. Among $m$ consecutive moves starting on M, the divisions occur at positions $3, 7, 11, \dots$, so there are at most $\frac{3m+2}{4}$ non-division moves $(\op{M}, \op{S}, \op{A})$.

Since a division that makes a new number must divide by $b \ge 2$ (dividing by 1 only repeats its numerator) and the maximum value made is $kP$, division can never produce a number in the upper block $(kP/2,\, kP]$. Every number in that upper block must be created by one of the three non-division moves. Comparing the non-division budget for $m = (k-1)P$ moves with the size of the upper block (which contains at least $kP/2$ numbers) gives
\[
  \frac{3(k-1)P + 2}{4} \;\ge\; \frac{kP}{2}.
\]
Multiplying both sides by 4 and expanding,
\[
  3(k-1)P + 2 \ge 2kP \implies 3kP - 3P + 2 \ge 2kP \implies (k-3)P \ge -2.
\]
Dividing by $P$,
\[
  k - 3 \;\ge\; -\frac{2}{P} \implies k \;\ge\; 3 - \frac{2}{P}.
\]
For an integer factor $k$ and $P \ge 10$, this requires $k \ge 3$.

Doubling ($k = 2$) fails concretely: the upper block $(P, 2P]$ requires $P = 4q + 2$ numbers, but the $P$ moves of the extension contain only $3q + 2$ non-division moves, leaving us $q$ moves short. Thus $k = 3$ is the smallest factor for which an extension from $\{1, \dots, P\}$ to $\{1, \dots, kP\}$ is allowed by move arithmetic.

At $k = 3$, the $2P$ new moves form exactly $P/2$ complete rounds, and the budget balances exactly, with no move to spare: $\frac{3}{4}(2P) = \frac{3P}{2}$ non-division moves, matching the $|Y| = \frac{3P}{2}$ elements in the upper block. Furthermore, $kP = 3(4q + 2) = 12q + 6 \equiv 2 \pmod 4$, which preserves the timing condition $P \equiv 2 \pmod 4$ for future extensions.

\subsection{Deriving the tripling construction}\label{sec:derive}

Before writing down any formulas, here is where the construction came from. When we looked at the moves our search program found for extending a completed interval $\{1, \dots, P\}$, the same three habits kept showing up:
\begin{enumerate}
  \item Multiplications consistently multiplied 3 by decreasing integers $P, P-1, P-2, \dots$ to establish high anchor values across $Y$.
  \item Subtractions and additions immediately attached $\pm 1$ around these multiples of 3, building contiguous triples $\{3k-1, 3k, 3k+1\}$.
  \item Division steps always halved a value already made earlier in the extension to fill lower targets in $X$.
\end{enumerate}

\paragraph{The conceptual derivation.}
The upper block $Y = \{H+1, \dots, 3P\}$ contains $\frac{3P}{2}$ numbers unreachable by division. Since each 4-move round supplies 3 non-division moves $(\op{M}, \op{S}, \op{A})$, each round must construct exactly 3 numbers in $Y$. To do this systematically:
\begin{itemize}
  \item For multiplication (M), we select anchor values $C_j = 3(P - j)$ spaced by 3. Since $3 \le P$ and $P - j \le P$, the operation $3 \times (P - j)$ is always valid using available numbers.
  \item For subtraction (S) and addition (A), we use the available number 1 to compute $C_j - 1$ and $C_j + 1$. Together with $C_j$, this forms the triple $\{C_j - 1, C_j, C_j + 1\}$.
  \item For division (D), reaching a target element $x = H - j$ in the lower block $X$ requires halving its double:
  \[
    2(H - j) = 2\Bigl(\tfrac{3P}{2} - j\Bigr) = 3P - 2j.
  \]
  This is always even, since $P$ is even, so the division is exact.
\end{itemize}
Here is the key observation. As $j$ goes up by 1, the number a division needs, $3P - 2j$, drops by 2, while the anchors $C_j = 3P - 3j$ drop by 3. The anchors move down faster than the demand, so the number a division needs always lands on or next to an anchor $C_k$ with $k \le j$, which means it is made no later than round $j$. Section~\ref{sec:lowerX} checks the order of moves inside a round.

\subsection{Partitioning into two blocks}\label{sec:partition}

To turn this balance into an actual construction, we split the $2P$ new numbers at the ceiling $H$:
\[
  X = \{P+1, \dots, H\}, \qquad Y = \{H+1, \dots, 3P\}.
\]
Using $P = 4q + 2$ and $H = 6q + 3$, we check the size of each block:
\begin{align*}
  |X| &= H - P = (6q + 3) - (4q + 2) = 2q + 1 = \tfrac{P}{2},\\
  |Y| &= 3P - H = (12q + 6) - (6q + 3) = 6q + 3 = \tfrac{3P}{2}.
\end{align*}
Here $X$ is the lower block (numbers reachable by division) containing $P/2$ elements, and $Y$ is the upper block (numbers unreachable by division) containing $3P/2$ elements.

In each of the $P/2$ extension rounds, the three non-division moves $(\op{M}, \op{S}, \op{A})$ construct three numbers in $Y$. Halving a number in $Y$ yields a value in $(H/2, H]$, an interval that covers $X$. Thus each round's division takes a number already built in $Y$ and places it into $X$, as illustrated in Figure~\ref{fig:blocks}.

\begin{figure}[ht]
\centering
\small
\begin{tikzpicture}
  \fill[gray!15] (0,0) rectangle (3,0.5);
  \draw (0,0) rectangle (3,0.5);
  \draw (3,0) rectangle (5,0.5);
  \draw (5,0) rectangle (11,0.5);
  \node at (1.5,0.25) {already made};
  \node at (4,0.25) {$X$};
  \node at (8,0.25) {$Y$};
  \node[below] at (0,0) {$1$};
  \node[below] at (3,0) {$P$};
  \node[below] at (5,0) {$H$};
  \node[below] at (11,0) {$3P$};
  \draw[decorate,decoration={brace,amplitude=4pt}] (3.05,0.65) -- (4.95,0.65)
    node[midway,above=4pt,align=center,font=\footnotesize] {$P/2$ numbers\\ one per round};
  \draw[decorate,decoration={brace,amplitude=4pt}] (5.05,0.65) -- (10.95,0.65)
    node[midway,above=4pt,align=center,font=\footnotesize] {$3P/2$ numbers\\ three per round: $\op{M}, \op{S}, \op{A}$};
  \draw[-{Stealth[length=4pt]}] (8,-0.55) to[bend left=25] (4,-0.55);
  \node[font=\footnotesize] at (6,-1.25) {D: halve a number of $Y$ to reach $X$};
\end{tikzpicture}
\caption{The extension from $P$ to $3P$. Three moves of each round work in $Y$ and one in $X$.}
\label{fig:blocks}
\end{figure}

\subsection{Building the upper block \texorpdfstring{$Y$}{Y}}\label{sec:upperY}

To reach the top of $Y$, adding will not do, since two numbers from $\{1, \dots, P\}$ add up to at most $P + P = 2P$. Only a multiplication can jump that high, and multiplying by 3 gives a ladder whose rungs are spaced exactly far enough apart for one subtraction and one addition to fill the gaps.

\paragraph{Definition of \texorpdfstring{$C_j$}{C\_j}.}
For round index $j = 0, 1, \dots, \frac{P}{2} - 1$ (where $\frac{P}{2} - 1 = 2q$), we define
\begin{equation}\label{eq:Cj}
  C_j = 3(P - j) = 3(4q + 2 - j) = 12q + 6 - 3j = 3P - 3j.
\end{equation}
Here $j$ acts as a round counter, so $C_j$ is the $j$-th multiple of 3 counting down from $3P$. Three quick checks show that $C_j$ does what we want:
\begin{enumerate}
  \item Both 3 and $P - j$ lie in $\{1, \dots, P\}$, because $j \le \frac{P}{2} - 1$ implies $P - j \ge \frac{P}{2} + 1 \ge 1$. Thus the multiplication $3 \times (P - j)$ is allowed.
  \item The values are distinct and decrease by 3, since $C_j - C_{j+1} = (3P - 3j) - \bigl(3P - 3(j+1)\bigr) = 3$.
  \item Every $C_j$ lies in $Y$, since the smallest value occurs at $j = \frac{P}{2} - 1 = 2q$, giving $C_{2q} = 3(2q + 2) = 6q + 6 = H + 3 > H$.
\end{enumerate}

Because consecutive $C_j$ differ by 3, the two numbers strictly between $C_{j+1}$ and $C_j$ are $C_{j+1} + 1$ and $C_j - 1$. Subtraction and addition using the available number 1 reach $C_j - 1$ and $C_j + 1$:
\[
  \op{S}:\ C_j - 1, \qquad \op{A}:\ C_j + 1.
\]
So the three non-division moves of round $j \ge 1$ produce the consecutive triple
\[
  T_j = \{C_j - 1,\ C_j,\ C_j + 1\},
\]
and since $C_{j-1} = C_j + 3$, the triple $T_{j-1}$ begins right after $T_j$ ends. The triples therefore tile an interval:
\[
  \bigcup_{j=1}^{P/2-1} T_j = \bigl[C_{P/2-1} - 1,\ C_1 + 1\bigr] = [H+2,\ 3P-2].
\]

Round 0 is the exception. Its multiplication gives $C_0 = 3P$ and its subtraction $3P - 1$. However, $C_0 + 1 = 3P + 1$ exceeds $3P$, so its addition move makes $H + 1$ instead (by adding 1 to $H$, which was made by move D of round 0). Combining these yields
\[
  [H+2,\ 3P-2] \cup \{3P-1,\ 3P\} \cup \{H+1\} = [H+1,\ 3P] = Y,
\]
making each number of $Y$ exactly once. The count agrees, because rounds 1 to $P/2 - 1$ give three numbers each and round 0 gives three, so $3\bigl(\frac{P}{2} - 1\bigr) + 3 = \frac{3P}{2} = |Y|$.

\subsection{Building the lower block \texorpdfstring{$X$}{X}}\label{sec:lowerX}

So where does the number we halve come from? To make $x \in X = \{P+1, \dots, H\}$ by division, we halve its double $2x \in Y$, and we already know from Section~\ref{sec:derive} that this double sits on or next to an anchor. The cases below just pin down which one. We parameterize the target as $x = H - j$ for round index $j \in \{0, 1, \dots, \frac{P}{2} - 1\}$. Substituting $H = 6q + 3$, the required double is
\begin{equation}\label{eq:double}
  2x = 2(H - j) = 2(6q + 3 - j) = 12q + 6 - 2j = 3P - 2j.
\end{equation}
To see exactly which multiple $C_k$ or neighbour $C_k \pm 1$ supplies $3P - 2j$, write $j$ as $3t$, $3t+1$ or $3t+2$ with $t \ge 0$, according to its remainder modulo 3:
\begin{itemize}
  \item If $j = 3t$, then
  \[
    3P - 2j = 3P - 6t = 3(P - 2t) = C_{2t},
  \]
  so the number to halve is exactly the multiplier $C_{2t}$.
  \item If $j = 3t + 1$, then
  \[
    3P - 2j = 3P - 6t - 2 = (3P - 6t - 3) + 1 = 3\bigl(P - (2t+1)\bigr) + 1 = C_{2t+1} + 1,
  \]
  so the number to halve is $C_{2t+1} + 1$, created by the addition move of round $2t + 1$.
  \item If $j = 3t + 2$, then
  \[
    3P - 2j = 3P - 6t - 4 = (3P - 6t - 3) - 1 = 3\bigl(P - (2t+1)\bigr) - 1 = C_{2t+1} - 1,
  \]
  so the number to halve is $C_{2t+1} - 1$, created by the subtraction move of round $2t + 1$.
\end{itemize}
Since $2t \le 3t$ and $2t + 1 \le 3t + 1$, with equality only when $t = 0$, every numerator is made in an earlier round, except in rounds 0 and 1. Round 0 causes no trouble, because its multiplication makes $C_0$ before its division needs it.

There is one snag. Inside a round the order is $\op{M}, \op{S}, \op{D}, \op{A}$, so in round 1 ($j = 1$, so $t = 0$) the division would need $C_1 + 1$ one move before the addition makes it. The fix is to swap what rounds 1 and 2 divide:
\begin{itemize}
  \item In round 1, move D makes $H - 2$ by halving $C_1 - 1 = 3P - 4$ (made by move S of round 1).
  \item In round 2, move D makes $H - 1$ by halving $C_1 + 1 = 3P - 2$ (made by move A of round 1).
\end{itemize}
From round 3 onward, round $j$ makes $H - j$ directly, and the divisions yield every element of $X$:
\[
  H,\ H - 2,\ H - 1,\ H - 3,\ \dots,\ H - \bigl(\tfrac{P}{2} - 1\bigr) = P + 1.
\]

\subsection{Summary of the tripling construction}\label{sec:summary}

Table~\ref{tab:rounds} collects the four moves of each round, and Table~\ref{tab:divisions} lists the divisions in full, since they are the only moves whose form changes from round to round.

\begin{table}[ht]
\centering
\begin{tabular}{ccc}
\toprule
Op & Round 0 & Round $j \ge 1$ \\
\midrule
M & $3 \times P = 3P$ & $3 \times (P - j) = C_j$ \\
S & $3P - 1$ & $C_j - 1$ \\
D & $3P/2 = H$ & half of $C_k$ or $C_k \pm 1$ (Table~\ref{tab:divisions}) \\
A & $H + 1$ & $C_j + 1$ \\
\bottomrule
\end{tabular}
\caption{The four moves of each round of the tripling step.}
\label{tab:rounds}
\end{table}

\begin{table}[ht]
\centering
\begin{tabular}{ccccc}
\toprule
Round & Number halved & Written as & Made in round & Result \\
\midrule
$0$ & $3P$ & $C_0$ & $0$ (M) & $H$ \\
$1$ & $3P - 4$ & $C_1 - 1$ & $1$ (S) & $H - 2$ \\
$2$ & $3P - 2$ & $C_1 + 1$ & $1$ (A) & $H - 1$ \\
\addlinespace
$3t$ & $3P - 6t$ & $C_{2t}$ & $2t$ (M) & $H - 3t$ \\
$3t + 1$ & $3P - 6t - 2$ & $C_{2t+1} + 1$ & $2t + 1$ (A) & $H - 3t - 1$ \\
$3t + 2$ & $3P - 6t - 4$ & $C_{2t+1} - 1$ & $2t + 1$ (S) & $H - 3t - 2$ \\
\bottomrule
\end{tabular}
\caption{The division in each round of the tripling step, for any $P \equiv 2 \pmod 4$ with $P \ge 10$ and $t \ge 1$.}
\label{tab:divisions}
\end{table}

\begin{table}[ht]
\centering
\begin{tabular}{ccccc}
\toprule
Round & M & S & D & A \\
\midrule
0 & $3 \times 10 = \hl{cred}{30}$ & $30 - 1 = 29$ & $\hl{cred}{30}/2 = 15$ & $15 + 1 = 16$ \\
1 & $3 \times 9 = 27$ & $27 - 1 = \hl{cblue}{26}$ & $\hl{cblue}{26}/2 = 13$ & $27 + 1 = \hl{cgreen}{28}$ \\
2 & $3 \times 8 = \hl{cmag}{24}$ & $24 - 1 = 23$ & $\hl{cgreen}{28}/2 = 14$ & $24 + 1 = 25$ \\
3 & $3 \times 7 = 21$ & $21 - 1 = 20$ & $\hl{cmag}{24}/2 = 12$ & $21 + 1 = \hl{corange}{22}$ \\
4 & $3 \times 6 = 18$ & $18 - 1 = 17$ & $\hl{corange}{22}/2 = 11$ & $18 + 1 = 19$ \\
\bottomrule
\end{tabular}
\caption{The tripling step from $P = 10$ to $3P = 30$.}
\label{tab:p10}
\end{table}

\begin{example}
For $P = 10$ ($q = 2$) we have $H = 15$ and $3P = 30$. Starting after the base sequence for 10 (Appendix~\ref{app:base}), the tripling step is shown in Table~\ref{tab:p10}. Each number that a later division halves is printed in bold colour where it is made and where it is halved. Together, the moves make $11, \dots, 30$, each once.
\end{example}

\begin{lemma}[Tripling]\label{lem:tripling}
If $P \equiv 2 \pmod 4$, $P \ge 10$, and $P$ has a perfect sequence from 2, then so does $3P$.
\end{lemma}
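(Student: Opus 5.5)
The plan is to assemble the construction of Sections~\ref{sec:timing}--\ref{sec:summary} into a verified sequence. Start from the given perfect sequence for $\{1,\dots,P\}$, which ends on move $P-1$. Append the $2P$ moves of Table~\ref{tab:rounds}, organised into the $P/2$ rounds $j=0,\dots,P/2-1$, with the divisions taken from Table~\ref{tab:divisions}. The total is $P-1+2P=3P-1$ moves. So once every appended move is shown to be legal and to produce a distinct new number in $\{P+1,\dots,3P\}$, all of $1,\dots,3P$ are made in $3P-1$ moves, and the sequence is perfect.

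First I will check the operations. By the timing argument of Section~\ref{sec:timing}, $P\equiv 2\pmod 4$ forces move $P$ to be a multiplication. The appended moves therefore run in rounds $\op{M},\op{S},\op{D},\op{A}$, matching the columns of Table~\ref{tab:rounds}.

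Second, I will check that every operand is available when it is used.
\begin{itemize}
  \item Multiplications use $3$ and $P-j$, both in $\{1,\dots,P\}$ because $P-j\ge P/2+1$.
  \item Subtractions and additions use $1$ and $C_j$, both made earlier; in round 0 the addition uses $1$ and $H$, made by that round's division.
  \item Divisions are the delicate part. For each row of Table~\ref{tab:divisions} I will confirm that the numerator is even (it is $3P-2j$ or $3P$, and $P$ is even), that it lies in $Y$, and that it was made strictly before the division move.
\end{itemize}
For rows $3t,3t+1,3t+2$ with $t\ge1$, the numerator comes from round $2t$ or $2t+1$. This is an earlier round, since $2t<3t$ and $2t+1<3t+1$, so it is already present. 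Rounds 0, 1 and 2 are checked directly against the intra-round order $\op{M},\op{S},\op{D},\op{A}$: round 0 halves $C_0$ made by its own M, round 1 halves $C_1-1$ made by its own S, and round 2 halves $C_1+1$ made by round 1's A. I also need every $j\le P/2-1$ to be covered by some row; the residues of $j$ modulo 3 cover all cases, with the swap applied only at $j=1,2$.

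Third, I will verify distinctness and range.
\begin{itemize}
  \item The non-division outputs tile $Y$ exactly: the triples $T_j$ for $j\ge1$ cover $[H+2,3P-2]$, and round 0 supplies $3P,3P-1,H+1$, as computed in Section~\ref{sec:upperY}. The endpoint check needs $C_{2q}-1=H+2$.
  \item The division outputs are $H-j$ for $j=0,\dots,P/2-1$, with only the two values at $j=1,2$ permuted. These are exactly the elements of $X=\{P+1,\dots,H\}$.
\end{itemize}
Since $X$ and $Y$ are disjoint and both lie above $P$, every one of the $2P$ moves makes a new number, and together they make all of $\{P+1,\dots,3P\}$.

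The main obstacle is the availability check for the divisions, especially at the boundary. Rounds 1 and 2 need the swap, and I must also confirm that for $j=3t+1$ and $j=3t+2$ the round index $2t+1$ does not exceed $P/2-1$; this holds because $2t+1\le j$. I will handle this by a direct case check over $j\bmod 3$, noting that small $j$ occur for every $P\ge10$. The example with $P=10$ in Table~\ref{tab:p10} serves as a sanity check.
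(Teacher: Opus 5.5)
Your proposal is correct and takes the paper's approach: the paper's proof of Lemma~\ref{lem:tripling} is the same assembly of the rounds in Tables~\ref{tab:rounds} and~\ref{tab:divisions}. It checks the cycle via the timing condition, operand availability (including the swap of rounds 1 and 2), exactness of the divisions, and that the M, S, A outputs tile $Y$ while the D outputs fill $X$. Your version just writes out checks the paper leaves to Sections~\ref{sec:upperY} and~\ref{sec:lowerX}, such as $2t+1 \le j \le P/2-1$ and the endpoint $C_{2q}-1 = H+2$.
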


\begin{proof}
The $2P$ new moves follow the AMSD cycle. Every operand is either in $\{1, \dots, P\}$ or was made earlier, and every division is exact. Moves $\op{M}, \op{S}, \op{A}$ generate $Y$ and moves D generate $X$, each number once. So after $(P - 1) + 2P = 3P - 1$ moves, we have constructed all of $\{1, \dots, 3P\}$.
\end{proof}

\section{Reaching Every \texorpdfstring{$P \equiv 2 \pmod 4$}{P = 2 mod 4}}\label{sec:everyP}

On its own, tripling only reaches $10 \to 30 \to 90 \to 270 \to \cdots$, which leaves out almost everything. To reach every $P \equiv 2 \pmod 4$, we change the last few moves using small adjustments we call \emph{patches}, and run the induction downwards.

\subsection{Reducing \texorpdfstring{$q$}{q}}\label{sec:reduce}

Let $P = 4q + 2$. Tripling $P$ yields $3P = 3(4q + 2) = 12q + 6 = 4(3q + 1) + 2$. If we add 0, 4 or 8 extra numbers at the end, the target becomes
\begin{alignat*}{2}
  3P &= 12q + 6 = 4(3q + 1) + 2 &\qquad& \implies \quad q' = 3q + 1,\\
  3P + 4 &= 12q + 10 = 4(3q + 2) + 2 && \implies \quad q' = 3q + 2,\\
  3P + 8 &= 12q + 14 = 4(3q + 3) + 2 && \implies \quad q' = 3q + 3.
\end{alignat*}
Reading this backwards: given any target parameter $q \ge 7$, we can write $q = 3r + a$ where $a \in \{1, 2, 3\}$ and $r = \frac{q - a}{3}$. Notice that $2 \le r < q$. The smaller parent interval has parameter $P' = 4r + 2$, and extending $P'$ by tripling (plus 0, 4 or 8) reaches $P = 4q + 2$.

Since $q$ strictly decreases at each step, repeating this reduction eventually lands on some $q \in \{2, 3, 4, 5, 6\}$, corresponding to five base cases
\[
  P \in \{10, 14, 18, 22, 26\}.
\]
Explicit perfect sequences for these five base cases are written out in Appendix~\ref{app:base}.

\subsection{Derivation of a representative patch (\texorpdfstring{$c = 2$}{c = 2})}\label{sec:patch2}

The patches can look like lists of lucky moves, so let's build one from scratch to show that they are not. Take the offset $c = 2$.

\paragraph{Goal.}
Extend a completed interval $\{1, \dots, P\}$ to $3P + 2$, where $P = 4q + 2$, $H = 6q + 3$ and $T = 3P = 12q + 6$.

\paragraph{Step 1: identify missing and target elements.}
We delete the final round of tripling (4 moves). This frees 4 moves and omits the 4 numbers that the deleted round would have made:
\[
  P + 1 = 4q + 3, \quad H + 2 = 6q + 5, \quad H + 3 = 6q + 6, \quad H + 4 = 6q + 7.
\]
We must also construct the 2 new top elements
\[
  3P + 1 = 12q + 7, \qquad 3P + 2 = 12q + 8.
\]
In total, we must generate these 6 distinct elements using $4 + 2 = 6$ moves running in the scheduled order $\op{M}, \op{S}, \op{D}, \op{A}, \op{M}, \op{S}$.

\paragraph{Step 2: derive each move.}
\begin{enumerate}
  \item Move 1 (M) makes $H + 3 = 6q + 6$ by factoring $6q + 6 = 3 \times (2q + 2)$. For $q \ge 2$, both $3 \le P$ and $2q + 2 \le 4q + 2 = P$ are available in $\{1, \dots, P\}$.
  \item Move 2 (S) makes $H + 2 = 6q + 5$ via $(H + 3) - 1$, using $H + 3$ from move 1 and the available number 1.
  \item Move 3 (D) makes $P + 1 = 4q + 3$. Division must halve its double $2(P + 1) = 8q + 6 = 2P + 2$, so we compute $(8q + 6)/2$. The numerator $8q + 6$ was made in an earlier, kept tripling round.
  \item Move 4 (A) makes $H + 4 = 6q + 7$ via $(H + 3) + 1$.
  \item Move 5 (M) makes $3P + 2 = 12q + 8$ by factoring $4 \times (3q + 2)$. Both $4 \le P$ and $3q + 2 \le P$ are available for $q \ge 2$.
  \item Move 6 (S) makes $3P + 1 = 12q + 7$ via $(3P + 2) - 1$, using $3P + 2$ from move 5.
\end{enumerate}
Every line of the $c = 2$ table comes from one of these six steps.

\subsection{Patches and the proof of Lemma~\texorpdfstring{\ref{lem:everyP}}{3}}\label{sec:patches}

Figure~\ref{fig:patch} shows how a patch fits into the sequence. Applying the derivation above across all eleven residue offsets yields the complete patch tables in Appendix~\ref{app:patches}; Tables~\ref{tab:c2} and~\ref{tab:c4} show the two used most often.

\begin{lemma}\label{lem:everyP}
Every $P \ge 10$ with $P \equiv 2 \pmod 4$ has a perfect sequence from 2.
\end{lemma}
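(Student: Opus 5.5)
Strong induction on $q$, where $P = 4q+2$. For the base cases $q \in \{2,3,4,5,6\}$, that is $P \in \{10,14,18,22,26\}$, we take the explicit perfect sequences of Appendix~\ref{app:base}; each is verified by running through its moves and checking that move $m$ uses the operation $\circ_m$, that every division is exact, and that the $P-1$ results are exactly $\{1,\dots,P\}\setminus\{2\}$.

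For $q \ge 7$, write $q = 3r + a$ with $a \in \{1,2,3\}$, as in Section~\ref{sec:reduce}. Then $2 \le r < q$, so $P' = 4r+2 \ge 10$ has a perfect sequence by the induction hypothesis, and $P = 3P' + 4(a-1)$. If $a = 1$ we have $P = 3P'$ and Lemma~\ref{lem:tripling} finishes the step. If $a \in \{2,3\}$ we need an extension of the perfect sequence for $P'$ to one for $3P'+4$ or $3P'+8$. We build it in the style of the $c=2$ derivation of Section~\ref{sec:patch2}: run the tripling rounds of Table~\ref{tab:rounds} except the last one or two, then spend the freed moves plus the $4$ or $8$ extra moves on the deleted numbers together with the new top numbers $3P'+1,\dots,3P'+c$.

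Because $c \equiv 0 \pmod 4$, the extension still has length $2P'+c$. Since $2P'$ is divisible by $4$, it starts on M and ends on A, so $3P'+c \equiv 2 \pmod 4$, as the timing condition requires. I would state the patch as a generic table in $q'$ (the parameter of $P'$) and check each move:
\begin{itemize}
\item each M has the form $u \times v$ with $u, v \le P'$ or previously made, for example $3\times(2q'+2)$ or $4\times(3q'+2)$;
\item each S or A is $\pm 1$ applied to a number already made;
\item each D halves an even number made in a round that was kept.
\end{itemize}
Counting then shows that the kept tripling moves and the patch moves produce every element of $\{P'+1,\dots,3P'+c\}$ exactly once.

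The main obstacle is the new top numbers above $3P'$. They cannot come from division, and adding or multiplying by $3$ stops at $3P'$. The fix is factorisations with a different small factor: multiples of $4$ from $4\times k$ with $k \le P'$, each followed by S and A moves of $\pm 1$ that fill in the neighbours. This has to be arranged so that the M slots of the schedule land on these products while the D slots still find even numerators that are already built.

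I would handle $c=4$ and $c=8$ by deleting one or two final rounds and redistributing, exactly as in the $c=2$ derivation. Small-$q'$ side conditions, such as $3q'+c/4 \le P'$, hold once $q' \ge 2$. If needed I would also use the other offsets in Appendix~\ref{app:patches}. Once both patches are valid for all $r \ge 2$, the induction closes.
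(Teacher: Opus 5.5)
Your reduction matches the paper's: induction on $q$, the five base cases, $q=3r+a$ with $2\le r<q$, $P=3P'+4(a-1)$, and Lemma~\ref{lem:tripling} when $a=1$. The gap is the case $a\in\{2,3\}$, which is the only part of the step that needs real work. You never write down patches for $c=4$ and $c=8$, and the template you commit to cannot be realised.

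You require every division to halve an even number made in a kept round. For $c=4$ this fails whether you delete one final round or two. Write $T=3P'$ and $H=\tfrac32P'$, and let $d$ be the number of deleted rounds.
\begin{itemize}
\item The patch has $4d+4$ moves, hence $d+1$ divisions.
\item No number ever exceeds $T+4$, so a division that makes something new gives at most $(T+4)/2=H+2$.
\item The numbers still to be made that are this small are exactly $P'+1,\dots,P'+d$ and $H+2$. There are $d+1$ of them for $d+1$ divisions, so $H+2$ must come from a division.
\item Its numerator can only be $2(H+2)=T+4$. This lies above $3P'$, so the patch itself must build it first.
\end{itemize}
The same count for $c=8$ ($d+2$ divisions, each giving at most $H+4$) forces at least two divisions to halve patch-made numbers among $T+4$, $T+6$ and $T+8$.

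Your premise that adding stops at $3P'$ is also wrong. $T$ was made in round~0, and $T+4$ is a legal addition. This is exactly how the paper's $c=4$ patch gets above $3P'$ (line~4), before halving that number to get $H+2$ (line~7). Its $c=8$ patch likewise uses $6\times(2q'+2)=T+6$ and steps of $\pm2$ and $\pm3$, not just multiples of $4$ with $\pm1$ neighbours.

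To close the step, you have two options. You can write explicit tables and check operand availability, parity and distinctness for all $r\ge2$. Or you can cite the $c=4$ and $c=8$ tables of Appendix~\ref{app:patches} together with Lemma~\ref{lem:patch}, which carries out that check. With that citation your argument becomes the paper's proof.
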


\begin{proof}
By induction on $q$, where $P = 4q + 2$. For $q \le 6$, $P$ is one of the base cases in Appendix~\ref{app:base}. For $q \ge 7$, write $q = 3r + a$ with $a \in \{1, 2, 3\}$ and $2 \le r < q$. By induction, $P' = 4r + 2$ has a perfect sequence, and $P = 3P' + 4(a - 1)$. Lemma~\ref{lem:tripling} or the patch for $c = 4$ or $c = 8$ completes the step. Soundness of every patch is proved in Appendix~\ref{app:proof} (Lemma~\ref{lem:patch}).
\end{proof}

\begin{figure}[ht]
\centering
\footnotesize
\begin{tikzpicture}
  \fill[gray!15] (0,0) rectangle (2.4,0.6);
  \draw (0,0) rectangle (2.4,0.6);
  \draw (2.4,0) rectangle (6.4,0.6);
  \draw[dashed] (6.4,0) -- (8.9,0) (6.4,0.6) -- (8.9,0.6);
  \draw (8.9,0) rectangle (12,0.6);
  \node at (1.2,0.3) {sequence for $P$};
  \node at (4.4,0.3) {rounds 0 to $P/2 - 2$};
  \node at (7.65,0.3) {last round};
  \node at (10.45,0.3) {patch};
  \node[below=2pt,align=center,font=\scriptsize] at (1.2,0) {$P - 1$ moves\\ makes $1, \dots, P$};
  \node[below=2pt,align=center,font=\scriptsize] at (4.4,0) {$2P - 4$ moves\\ makes $P + 1, \dots, 3P$ apart\\ from the four at the right};
  \node[below=2pt,align=center,font=\scriptsize] at (7.65,0) {4 moves, deleted;\\ would make $P + 1$,\\ $H + 2$, $H + 3$, $H + 4$};
  \node[below=2pt,align=center,font=\scriptsize] at (10.45,0) {$4 + c$ moves\\ makes those four and\\ $3P + 1, \dots, 3P + c$};
  \draw[-{Stealth[length=4pt]}] (7.65,0.7) to[bend left=45]
    node[midway,above,font=\scriptsize] {replaced by} (10.45,0.7);
\end{tikzpicture}
\caption{Building a sequence for $3P + c$. The patch replaces the deleted round.}
\label{fig:patch}
\end{figure}

\begin{table}[ht]
\centering
\small
\setlength{\tabcolsep}{3.5pt}
\begin{minipage}[b]{0.48\textwidth}
\centering
\begin{tabular}{llll}
\toprule
Op & Calculation & Result & From \\
\midrule
M & $3 \times (2q + 2)$ & $H + 3$ & \\
S & $(H + 3) - 1$ & $H + 2$ & line 1 \\
D & $(8q + 6)/2$ & $P + 1$ & kept \\
A & $(H + 3) + 1$ & $H + 4$ & line 1 \\
M & $4 \times (3q + 2)$ & $T + 2$ & \\
S & $(T + 2) - 1$ & $T + 1$ & line 5 \\
\bottomrule
\end{tabular}
\end{minipage}\hfill
\begin{minipage}[b]{0.48\textwidth}
\centering
\begin{tabular}{llll}
\toprule
Op & Calculation & Result & From \\
\midrule
M & $3 \times (2q + 2)$ & $H + 3$ & \\
S & $(H + 5) - 1$ & $H + 4$ & kept \\
D & $(8q + 6)/2$ & $P + 1$ & kept \\
A & $T + 4$ & $T + 4$ & kept \\
M & $4 \times (3q + 2)$ & $T + 2$ & \\
S & $(T + 2) - 1$ & $T + 1$ & line 5 \\
D & $(T + 4)/2$ & $H + 2$ & line 4 \\
A & $(T + 2) + 1$ & $T + 3$ & line 5 \\
\bottomrule
\end{tabular}
\end{minipage}

\medskip
\begin{minipage}[t]{0.48\textwidth}
\centering
\captionof{table}{Patch for $c = 2$ (6 moves).}
\label{tab:c2}
\end{minipage}\hfill
\begin{minipage}[t]{0.48\textwidth}
\centering
\captionof{table}{Patch for $c = 4$ (8 moves).}
\label{tab:c4}
\end{minipage}
\end{table}

\section{Reaching Every \texorpdfstring{$N$}{N}}\label{sec:everyN}

Lemma~\ref{lem:everyP} covers $N \equiv 2 \pmod 4$. For all other values of $N$, we write $N = 3P + c$ for a suitable offset $c$ and base parameter $P \equiv 2 \pmod 4$.

Since $P = 4q + 2$, we have $3P = 12q + 6 \equiv 6 \pmod{12}$. Therefore $N = 3P + c \equiv 6 + c \pmod{12}$. For each remainder of $N$ modulo 12, we choose $c$ so that $N - c \equiv 6 \pmod{12}$, as in Table~\ref{tab:offset}.

\begin{table}[ht]
\centering
\setlength{\tabcolsep}{4.5pt}
\begin{tabular}{l*{12}{c}}
\toprule
$N \bmod 12$ & 0 & 1 & 2 & 3 & 4 & 5 & 6 & 7 & 8 & 9 & 10 & 11 \\
\midrule
$c$ & 6 & 7 & 8 & 9 & 10 & 11 & 0 & 1 & 2 & 15 & 4 & 5 \\
\bottomrule
\end{tabular}
\caption{The offset $c$ for each residue of $N$ modulo 12.}
\label{tab:offset}
\end{table}

Then $N - c = 12q + 6$ for an integer $q$, so
\[
  P = \frac{N - c}{3} = \frac{12q + 6}{3} = 4q + 2,
\]
which is a whole number satisfying $P \equiv 2 \pmod 4$.

No patch exists for $c = 3$ when the last one or two rounds are deleted, at least for $2 \le q \le 40$ (Section~\ref{sec:patchsearch}), so we replace it with $c = 15$, which belongs to the same residue class since $15 \equiv 3 \pmod{12}$. The patches for $c = 1$ and $c = 15$ delete the last two rounds (releasing 8 numbers to be re-made) rather than just the final round (Table~\ref{tab:lengths}). All patches are listed in Appendix~\ref{app:patches}.

\begin{table}[ht]
\centering
\setlength{\tabcolsep}{4.5pt}
\begin{tabular}{l*{11}{c}}
\toprule
$c$ & 1 & 2 & 4 & 5 & 6 & 7 & 8 & 9 & 10 & 11 & 15 \\
\midrule
rounds deleted & 2 & 1 & 1 & 1 & 1 & 1 & 1 & 1 & 1 & 1 & 2 \\
patch length & 9 & 6 & 8 & 9 & 10 & 11 & 12 & 13 & 14 & 15 & 23 \\
\bottomrule
\end{tabular}
\caption{Number of deleted rounds and patch length for each offset.}
\label{tab:lengths}
\end{table}

\subsection{Size of \texorpdfstring{$P$}{P}}\label{sec:sizeP}

The patches need $P \ge 10$, and the $c = 1$ patch needs $P \ge 14$. For the smallest $N \ge 34$ in each residue class, Table~\ref{tab:smallestN} gives the resulting $P$. Larger $N$ in the same class give larger $P$.

\begin{table}[ht]
\centering
\setlength{\tabcolsep}{4.5pt}
\begin{tabular}{l*{12}{c}}
\toprule
$N \bmod 12$ & 0 & 1 & 2 & 3 & 4 & 5 & 6 & 7 & 8 & 9 & 10 & 11 \\
\midrule
smallest $N \ge 34$ & 36 & 37 & 38 & 39 & 40 & 41 & 42 & 43 & 44 & 45 & 34 & 35 \\
$P = (N - c)/3$ & 10 & 10 & 10 & 10 & 10 & 10 & 14 & 14 & 14 & 10 & 10 & 10 \\
\bottomrule
\end{tabular}
\caption{The value of $P$ for the smallest $N \ge 34$ in each residue class.}
\label{tab:smallestN}
\end{table}

\begin{theorem}\label{thm:large}
Every $N \ge 34$ has a perfect sequence from 2 with the cycle \textup{AMSD}.
\end{theorem}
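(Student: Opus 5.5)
Given $N \ge 34$, we first choose the offset $c$ from Table~\ref{tab:offset} according to $N \bmod 12$. We then set $P = (N-c)/3$. As computed just before Table~\ref{tab:offset}, $P = 4q+2$ is an integer with $P \equiv 2 \pmod 4$. We check that $P$ is large enough: Table~\ref{tab:smallestN} shows $P \ge 10$ for the smallest $N \ge 34$ in each residue class, and $P \ge 14$ in the class $N \equiv 7 \pmod{12}$, where $c=1$. Since $P$ increases with $N$ inside a residue class, these bounds hold for every $N \ge 34$. So Lemma~\ref{lem:everyP} gives a perfect sequence for $\{1,\dots,P\}$ from 2, ending on move $P-1 \equiv 1 \pmod 4$.

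If $c = 0$, that is $N \equiv 6 \pmod{12}$, then Lemma~\ref{lem:tripling} applied to $P$ finishes directly; alternatively, Lemma~\ref{lem:everyP} applies to $N$ itself, since $N \equiv 2 \pmod 4$. For $c \neq 0$, we run the tripling construction of Section~\ref{sec:tripling} on $P$, but delete its last round (or its last two rounds when $c \in \{1,15\}$). The following move then still falls at a multiplication, because each deleted round is a full $\op{M},\op{S},\op{D},\op{A}$ block. We then append the patch for offset $c$ from Appendix~\ref{app:patches}, whose length is given in Table~\ref{tab:lengths}.

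The count works as follows. The kept tripling rounds make all of $\{P+1,\dots,3P\}$ except the $4$ (or $8$) numbers of the deleted rounds. The patch has $4+c$ (or $8+c$) moves and makes exactly those numbers together with $3P+1,\dots,3P+c$. The total number of moves is therefore
\[
  (P-1) + 2P + c = N-1,
\]
and each move makes a new element of $\{1,\dots,N\}$, so the sequence is perfect.

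The real content is Lemma~\ref{lem:patch} in Appendix~\ref{app:proof}: every patch is sound for all admissible $q$. For each $c$ this means four checks:
\begin{itemize}
  \item each operand is available, either in $\{1,\dots,P\}$, in a kept round, or earlier in the patch;
  \item each division is exact and each subtraction is positive;
  \item the patch's outputs are distinct from one another and from everything already made;
  \item the operation in each line matches the cycle position.
\end{itemize}
I would verify these as in Section~\ref{sec:patch2}, writing every operand as a linear expression in $q$. The step I expect to be the main obstacle is the claim that the numerators a patch takes from \emph{kept} rounds (such as $8q+6$, or $T+4$) really are made in rounds that were not deleted. This requires a careful case analysis over the tables of Section~\ref{sec:lowerX}, especially in the two-round deletions for $c = 1, 15$. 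It is also where the size threshold ($P \ge 14$ for $c=1$) enters, since for smaller $q$ the needed rounds overlap the deleted ones.
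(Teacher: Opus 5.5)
Your proposal is correct and follows the paper's proof exactly: choose $c$ from Table~\ref{tab:offset}, set $P=(N-c)/3$, read off $P\ge 10$ (and $P\ge 14$ when $c=1$) from Table~\ref{tab:smallestN}, build $P$ by Lemma~\ref{lem:everyP}, and finish with Lemma~\ref{lem:tripling} or with the patch, justified by Lemma~\ref{lem:patch}. One small slip in your closing remarks: $T+4=3P+4$ exceeds $3P$, so it never comes from a kept round (in the $c=4$ patch, line 4 makes it and line 7 halves it), and no case analysis of the division tables is needed, because the numbers available when a patch starts are exactly $\{1,\dots,3P\}\setminus R$.
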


\begin{proof}
Let $c$ be the offset for $N \bmod 12$ from Table~\ref{tab:offset}, and let $P = (N - c)/3$. Then $P \equiv 2 \pmod 4$ and $P \ge 10$ (with $P \ge 14$ when $c = 1$). Lemma~\ref{lem:everyP} gives a perfect sequence for $P$. If $c = 0$, Lemma~\ref{lem:tripling} extends it to $3P = N$. Otherwise, Lemma~\ref{lem:patch} in Appendix~\ref{app:proof} shows that applying patch $c$ yields a perfect sequence for $3P + c = N$.
\end{proof}

\subsection{An example: the reduction chain}\label{sec:chain}

The whole proof is a chain of reductions. One patch takes $N$ to $P$, and repeated steps of Lemma~\ref{lem:everyP} take $P$ down to a base case. For $N = 100000$,
\begin{multline*}
  14 \xrightarrow{\times 3} 42 \xrightarrow{+8} 134 \xrightarrow{+8} 410 \xrightarrow{+4} 1234 \\
  \xrightarrow{\times 3} 3702 \xrightarrow{+4} 11110 \xrightarrow{\times 3} 33330 \xrightarrow{+10} 100000.
\end{multline*}
Here $\xrightarrow{\times 3}$ means $P \to 3P$ and $\xrightarrow{+c}$ means $P \to 3P + c$. Reading from right to left, $100000 \equiv 4 \pmod{12}$ gives $c = 10$ and $P = 33330$, after which each $q$ is reduced as in Section~\ref{sec:reduce}. The resulting sequence has 99999 moves.

\section{Small Cases and the Proof of Theorem~\texorpdfstring{\ref{thm:main}}{1}}\label{sec:small}

The general construction requires $P \ge 10$, so it covers all $N \ge 34$ and also $N = 30, 32$ (from $P = 10$ with $c = 0, 2$). The base cases $10, 14, 18, 22, 26$ are listed in Appendix~\ref{app:base}, and the case $N = 9$ is given in Section~\ref{sec:lower}. The remaining cases $9 \le N \le 33$ were solved by computer search and are listed in Appendix~\ref{app:small}.

\begin{proof}[Proof of Theorem~\ref{thm:main}]
Lemma~\ref{lem:lower} shows that at least $N - 1$ moves are needed. For $N \ge 34$, Theorem~\ref{thm:large} gives a perfect sequence. For $9 \le N \le 33$, a perfect sequence is given in Section~\ref{sec:lower} ($N = 9$), Appendix~\ref{app:base} ($N = 10, 14, 18, 22, 26$), Lemma~\ref{lem:tripling} and Lemma~\ref{lem:patch} with $P = 10$ ($N = 30, 32$), or Appendix~\ref{app:small} (all other $N$).
\end{proof}

The bound $N \ge 9$ cannot be lowered. Computer search confirmed that no legal sequence of $N - 1$ moves can construct all of $\{1, \dots, N\}$ starting from 2 for any $2 \le N \le 8$. For each $2 \le N \le 8$ we enumerated every legal sequence of $N - 1$ moves from 2, branching over all ordered pairs of numbers made so far. Discarding a move whose result repeats or leaves $\{1, \dots, N\}$ loses nothing, since a perfect sequence has to make a new number of $\{1, \dots, N\}$ at every move. None of these searches reached the full interval. Perfect sequences for these $N$ do exist if the cycle may begin on $\div$ instead of $+$. Starting from 2, this is enough for $N \le 4$. For $5 \le N \le 8$ the starting number must change as well: to 5 for $N = 5$, to 6 for $N = 6$, and to 6 or 7 for $N = 7, 8$. We do not treat these variants here.

\section{Search, Discovery, and Proof Verification}\label{sec:search}

The proof came from going back and forth between a computer search and working things out by hand:
\begin{enumerate}
  \item We first wrote a search program to enumerate legal move sequences for small $N$. It found the base sequences for $P \in \{10, 14, 18, 22, 26\}$ and the small cases in Appendix~\ref{app:small}, and its output led to the patterns described in Section~\ref{sec:derive}.
  \item When constructing the residue offsets $c$, we could not find a patch for $c = 3$. Noticing that residue $3 \pmod{12}$ can be satisfied by $c = 15$, we derived a two-round replacement patch (23 moves) that bridged the gap. Section~\ref{sec:patchsearch} shows that this was not just bad luck in the search.
  \item Guided by these empirical patterns, we formulated the general tripling construction and patch equations algebraically.
  \item To check every case, we wrote verification scripts in Python that build each sequence from the construction and check it move by move (Section~\ref{sec:verify}).
\end{enumerate}
All of the code needs only Python 3.8 or later and is available at
\[
  \text{\url{https://github.com/poscle/AMSD-Number-Generator}}.
\]

\subsection{Checking the construction}\label{sec:verify}

The program \texttt{amsd.py} builds the sequence for a given $N$ straight from the construction, in the way Section~\ref{sec:computing} describes, and then hands it to a separate checker that knows nothing about how the sequence was made. The checker asks only whether each move uses the operation the cycle calls for, whether both operands were already made, whether a division is exact, and whether the result is new and lies in $\{1, \dots, N\}$, and at the end whether all of $\{1, \dots, N\}$ was made in $N - 1$ moves.

Running \texttt{python amsd.py --check 9 100000} builds and checks every $N$ with $9 \le N \le 100000$, and all 99992 sequences pass. This covers the table cases up to $N = 33$ as well as every $N$ reached by the construction. Since a sequence for $N$ has $N - 1$ moves, the total work grows like $N^2$, and on our test machine (a 2.1 GHz Intel Xeon, Python 3.13) the full range takes about 80 minutes of computing time, which we split into the two ranges $9 \le N \le 70711$ and $70712 \le N \le 100000$ run side by side, finishing in about 45 minutes. A single large value is quick because the construction never searches, so building and checking the 99999-move sequence for $N = 100000$ alone takes about 0.2 seconds. For readers who do not want to wait for the full range, the script \texttt{targeted\_large\_checks.py} checks $N = 5000$, 9999, 10000, 12345, 20000, 35462, 50000, 99991 and 100000 in under a second.

\subsection{Searching for patches}\label{sec:patchsearch}

The patches in Appendix~\ref{app:patches} were derived by hand from search output, so it is fair to ask whether a patch for $c = 3$ exists and we simply missed it. A patch is small enough that we can search for all of them. When a patch deletes the last $d$ rounds, it has to make the $4d$ deleted numbers together with $T + 1, \dots, T + c$ in exactly $4d + c$ moves, and since the sequence is perfect, every move has to make one of these numbers. So at each move the search only chooses which missing number to make next, and checks that two available numbers give it under the operation the cycle requires. Which pair of operands is used does not matter for later moves, because it does not change which numbers have been made, so the search covers every possible patch and not only ones shaped like those in Appendix~\ref{app:patches}. The program \texttt{patch\_search.py} runs it for every $c \in \{1, \dots, 11, 15\}$, $d \in \{1, 2\}$ and $2 \le q \le 40$ in about 80 seconds, with the results in Table~\ref{tab:patchsearch}.

\begin{table}[ht]
\centering
\begin{tabular}{lcc}
\toprule
Offset & $d = 1$ & $d = 2$ \\
\midrule
$c = 1$ & 32 of 39 & 39 of 39 \\
$c = 3$ & 0 of 39 & 0 of 39 \\
$c = 15$ & 0 of 39 & 39 of 39 \\
every other $c \le 11$ & 39 of 39 & 39 of 39 \\
\bottomrule
\end{tabular}
\caption{Number of values $2 \le q \le 40$ for which a patch deleting $d$ rounds exists.}
\label{tab:patchsearch}
\end{table}

The search finds a patch wherever Appendix~\ref{app:patches} has one, which is a check that it works. For $c = 3$ there is no patch for any $q$ in this range, whether one round or two are deleted, and this is why we needed $c = 15$. For $c = 15$ there is no patch that deletes only one round, which is why ours deletes two. The case $c = 1$ is less obvious: one-round patches exist for most $q$ but not for $q = 2, 6, 11, 16, 17, 22$ or $31$, so no single one-round formula can work for every $q$, and the two-round patch is needed there too.

\section{Computing the Sequence}\label{sec:computing}

Since the proof is constructive, it also gives a way to write down a perfect sequence for a given $N \ge 34$:
\begin{enumerate}
  \item Look up $c$ in Table~\ref{tab:offset} and set $P = (N - c)/3$.
  \item Reduce $P$ as in Section~\ref{sec:reduce}, recording at each step whether it was $\times 3$, $+4$ or $+8$, until a base case is reached.
  \item Start from the base sequence in Appendix~\ref{app:base}, apply the recorded steps in reverse order, and finish with the patch for $c$.
\end{enumerate}
Every move in step 3 is given by a formula. In the tripling step, round $j$ is
\begin{center}
\begin{tabular}{ll}
\toprule
Op & Calculation \\
\midrule
M & $3 \times (P - j) = C_j$ \\
S & $C_j - 1$ \\
D & $(3P - 2j)/2 = H - j$ \quad (with rounds 1 and 2 swapped) \\
A & $C_j + 1$ \quad (or $H + 1$ when $j = 0$) \\
\bottomrule
\end{tabular}
\end{center}
and the patches are fixed lists. Because every move comes from a formula, no step has to check which numbers exist, so each move costs a fixed amount of work and the whole sequence takes work proportional to $N$. The tripling step is then a loop of four instructions repeated $P/2$ times, with fixed exceptions only in rounds 0, 1 and 2.

This matters because the sequences are hard to find any other way. A search that tries moves one at a time has to consider every pair of numbers made so far. Before move $m$ there are $m$ numbers, so $m^2$ ordered pairs, and a perfect sequence for $N$ is one of up to $\prod_{m=1}^{N-1} m^2 = \bigl((N-1)!\bigr)^2$ candidates, while the formulas give $N$ in the thousands at once. Knowing the operation in advance is what lets us fix the operands in advance too.

\section{Conclusion}\label{sec:conclusion}

We started with one number, four operations, and a cycle that never lets us choose which operation comes next, and still, for every $N \ge 9$, we reach the lower bound of $N - 1$ moves exactly.

This is possible because each operation fulfils a natural role once $\{1, \dots, P\}$ is available. Multiplication by 3 climbs to the top of the expanded interval, subtraction and addition fill the gaps around it, and division by 2 pulls large values back down to fill the lower block $X$. With $P = 10$ the first round shows all three at work, since $3 \times 10 = 30$ sits at the top, $30 - 1 = 29$ beside it, and $30/2 = 15$ in the middle. The counting in Section~\ref{sec:factor} shows that these jobs fit exactly, three numbers in the upper block and one in the lower block per round, so the construction follows the fixed order directly.

Turning that plan into a proof is what the numbers $C_j$ and the conditions modulo 3, 4 and 12 are for. The argument modulo 3 matters most, since it puts every number a division needs on or next to an anchor $C_k$ made in time (Section~\ref{sec:lowerX}). The other two are simpler, with modulo 4 keeping each extension on a multiplication and modulo 12 telling us which patch to use.

A computer can confirm perfect sequences for thousands of values, but only the construction tells us what happens beyond them, since every $N$ reduces to a base case in finitely many steps. The fixed cycle removes enough freedom that the remaining choices can be written in closed form.

\section{Open Questions}\label{sec:open}

The proofs of Lemmas~\ref{lem:tripling} and~\ref{lem:patch} never use the starting number. They need only a completed interval $\{1, \dots, P\}$, which contains 2 whatever the start, so dividing by 2 is always possible. The starting number enters only through the base cases, and there it does matter: an exhaustive search shows that 2 is the only starting number from which $P = 10$ has a perfect sequence, and the same holds for $P = 14$. Another start would therefore need a different set of base cases. Natural questions are:
\begin{itemize}
  \item Which other starting numbers give perfect sequences for all large $N$?
  \item Which other orders of the four operations work? Showing that some order fails for infinitely many $N$, or classifying the orders that succeed, would turn this from a result about one rule into a statement about which rules can work at all.
  \item What happens if an operation, such as division, is removed?
  \item How many perfect sequences are there for a given $N$?
  \item The patches were found case by case, and $c = 3$ had to be replaced by $c = 15$. Is there a single rule that extends $P$ to $3P + c$ for every $c$?
\end{itemize}

\section*{Use of AI}

The construction and its proofs were developed by the authors. The following tools were used, and the authors reviewed and take responsibility for all of their output.
\begin{itemize}
  \item Claude Opus 5.5 (Anthropic) was used to debug the authors' original Python verification code, which contained errors, into the robust version now in the repository. It was also used to debug \LaTeX\ typesetting, and to suggest revisions to the framing and wording of the expository passages in Sections 2--7, 10 and 11, to make the writing clearer. The mathematics, proofs and tables were not changed by these revisions.
  \item Claude Opus 5.5 (Anthropic) was also used to reconstruct the \LaTeX\ source of this manuscript from its compiled PDF after the original source file was lost, and to write the exhaustive patch search \texttt{patch\_search.py} described in Section~\ref{sec:patchsearch}. The authors ran the search and checked its results.
  \item Overleaf's integrated AI editing tools and Grammarly Pro were used for grammar refinement and editorial polishing.
\end{itemize}
The authors checked the terms of use of each tool and confirm that this use is suitable for publication.

\appendix

\section{Base Sequences}\label{app:base}

All five start from 2 and follow AMSD, reading down each column.

\par\medskip
{\small\noindent
\makebox[\textwidth][c]{%
\makebox[0.2\textwidth][c]{%
\begin{tabular}[t]{@{}r@{\,}c@{\,}l@{\,}c@{\,}l@{}}
\multicolumn{5}{@{}c@{}}{$P = 10$} \\
$2$ & $+$ & $2$ & $=$ & $4$ \\
$2$ & $\times$ & $4$ & $=$ & $8$ \\
$8$ & $-$ & $2$ & $=$ & $6$ \\
$8$ & $/$ & $8$ & $=$ & $1$ \\
$1$ & $+$ & $4$ & $=$ & $5$ \\
$2$ & $\times$ & $5$ & $=$ & $10$ \\
$8$ & $-$ & $1$ & $=$ & $7$ \\
$6$ & $/$ & $2$ & $=$ & $3$ \\
$1$ & $+$ & $8$ & $=$ & $9$ \\
\end{tabular}}%
\makebox[0.2\textwidth][c]{%
\begin{tabular}[t]{@{}r@{\,}c@{\,}l@{\,}c@{\,}l@{}}
\multicolumn{5}{@{}c@{}}{$P = 14$} \\
$2$ & $+$ & $2$ & $=$ & $4$ \\
$2$ & $\times$ & $4$ & $=$ & $8$ \\
$8$ & $-$ & $2$ & $=$ & $6$ \\
$8$ & $/$ & $8$ & $=$ & $1$ \\
$1$ & $+$ & $2$ & $=$ & $3$ \\
$2$ & $\times$ & $6$ & $=$ & $12$ \\
$12$ & $-$ & $2$ & $=$ & $10$ \\
$10$ & $/$ & $2$ & $=$ & $5$ \\
$2$ & $+$ & $12$ & $=$ & $14$ \\
$3$ & $\times$ & $3$ & $=$ & $9$ \\
$12$ & $-$ & $1$ & $=$ & $11$ \\
$14$ & $/$ & $2$ & $=$ & $7$ \\
$1$ & $+$ & $12$ & $=$ & $13$ \\
\end{tabular}}%
\makebox[0.2\textwidth][c]{%
\begin{tabular}[t]{@{}r@{\,}c@{\,}l@{\,}c@{\,}l@{}}
\multicolumn{5}{@{}c@{}}{$P = 18$} \\
$2$ & $+$ & $2$ & $=$ & $4$ \\
$2$ & $\times$ & $4$ & $=$ & $8$ \\
$8$ & $-$ & $2$ & $=$ & $6$ \\
$8$ & $/$ & $8$ & $=$ & $1$ \\
$1$ & $+$ & $2$ & $=$ & $3$ \\
$2$ & $\times$ & $6$ & $=$ & $12$ \\
$12$ & $-$ & $2$ & $=$ & $10$ \\
$10$ & $/$ & $2$ & $=$ & $5$ \\
$1$ & $+$ & $10$ & $=$ & $11$ \\
$2$ & $\times$ & $8$ & $=$ & $16$ \\
$16$ & $-$ & $2$ & $=$ & $14$ \\
$14$ & $/$ & $2$ & $=$ & $7$ \\
$1$ & $+$ & $12$ & $=$ & $13$ \\
$3$ & $\times$ & $6$ & $=$ & $18$ \\
$16$ & $-$ & $1$ & $=$ & $15$ \\
$18$ & $/$ & $2$ & $=$ & $9$ \\
$1$ & $+$ & $16$ & $=$ & $17$ \\
\end{tabular}}%
\makebox[0.2\textwidth][c]{%
\begin{tabular}[t]{@{}r@{\,}c@{\,}l@{\,}c@{\,}l@{}}
\multicolumn{5}{@{}c@{}}{$P = 22$} \\
$2$ & $+$ & $2$ & $=$ & $4$ \\
$2$ & $\times$ & $4$ & $=$ & $8$ \\
$8$ & $-$ & $2$ & $=$ & $6$ \\
$8$ & $/$ & $8$ & $=$ & $1$ \\
$1$ & $+$ & $2$ & $=$ & $3$ \\
$2$ & $\times$ & $6$ & $=$ & $12$ \\
$12$ & $-$ & $2$ & $=$ & $10$ \\
$10$ & $/$ & $2$ & $=$ & $5$ \\
$1$ & $+$ & $12$ & $=$ & $13$ \\
$2$ & $\times$ & $8$ & $=$ & $16$ \\
$16$ & $-$ & $2$ & $=$ & $14$ \\
$14$ & $/$ & $2$ & $=$ & $7$ \\
$1$ & $+$ & $14$ & $=$ & $15$ \\
$2$ & $\times$ & $10$ & $=$ & $20$ \\
$20$ & $-$ & $2$ & $=$ & $18$ \\
$18$ & $/$ & $2$ & $=$ & $9$ \\
$2$ & $+$ & $20$ & $=$ & $22$ \\
$3$ & $\times$ & $7$ & $=$ & $21$ \\
$18$ & $-$ & $1$ & $=$ & $17$ \\
$22$ & $/$ & $2$ & $=$ & $11$ \\
$1$ & $+$ & $18$ & $=$ & $19$ \\
\end{tabular}}%
\makebox[0.2\textwidth][c]{%
\begin{tabular}[t]{@{}r@{\,}c@{\,}l@{\,}c@{\,}l@{}}
\multicolumn{5}{@{}c@{}}{$P = 26$} \\
$2$ & $+$ & $2$ & $=$ & $4$ \\
$2$ & $\times$ & $4$ & $=$ & $8$ \\
$8$ & $-$ & $2$ & $=$ & $6$ \\
$8$ & $/$ & $8$ & $=$ & $1$ \\
$1$ & $+$ & $2$ & $=$ & $3$ \\
$2$ & $\times$ & $6$ & $=$ & $12$ \\
$12$ & $-$ & $2$ & $=$ & $10$ \\
$10$ & $/$ & $2$ & $=$ & $5$ \\
$2$ & $+$ & $12$ & $=$ & $14$ \\
$2$ & $\times$ & $8$ & $=$ & $16$ \\
$16$ & $-$ & $1$ & $=$ & $15$ \\
$14$ & $/$ & $2$ & $=$ & $7$ \\
$1$ & $+$ & $16$ & $=$ & $17$ \\
$2$ & $\times$ & $10$ & $=$ & $20$ \\
$20$ & $-$ & $2$ & $=$ & $18$ \\
$18$ & $/$ & $2$ & $=$ & $9$ \\
$1$ & $+$ & $18$ & $=$ & $19$ \\
$2$ & $\times$ & $12$ & $=$ & $24$ \\
$24$ & $-$ & $2$ & $=$ & $22$ \\
$22$ & $/$ & $2$ & $=$ & $11$ \\
$2$ & $+$ & $24$ & $=$ & $26$ \\
$3$ & $\times$ & $7$ & $=$ & $21$ \\
$24$ & $-$ & $1$ & $=$ & $23$ \\
$26$ & $/$ & $2$ & $=$ & $13$ \\
$1$ & $+$ & $24$ & $=$ & $25$ \\
\end{tabular}}%
}
\par}\smallskip

\section{Patches}\label{app:patches}

Throughout, $P = 4q + 2$, $H = 6q + 3$ and $T = 12q + 6 = 3P$. Each patch replaces the final round (or final two rounds) of the tripling construction and continues the cycle from M.

When a patch starts, every number from 1 to $3P$ is available except the ones made by the deleted rounds. Deleting the final round removes $P + 1$, $H + 2$, $H + 3$, $H + 4$, and deleting the final two rounds also removes $P + 2$, $H + 5$, $H + 6$, $H + 7$. So a patch may use, for example, $H + 5$ or $H + 8$ when these were made by a round that is kept.

The last column says where the operands come from. \emph{Kept} means a number made by the part of the tripling construction that is kept, and \emph{line $i$} means the result of line $i$ of the same patch. A blank entry means both operands are in $\{1, \dots, P\}$. Each result is new and lies in $\{1, \dots, 3P + c\}$.

\noindent
\begin{minipage}[t]{0.48\textwidth}
\centering
\parbox{\linewidth}{\centering \textbf{Offset} $c = 1$ (deletes two rounds; 9 moves)}\par\smallskip
\small
\setlength{\tabcolsep}{3.5pt}
\begin{tabular}{rclll}
\toprule
\# & Op & Calculation & Result & From \\
\midrule
1 & M & $3\times(2q+2)$ & $H+3$ &  \\
2 & S & $(H+3)-1$ & $H+2$ & line 1 \\
3 & D & $(8q+6)/2$ & $P+1$ & kept \\
4 & A & $(H+3)+1$ & $H+4$ & line 1 \\
5 & M & $2\times(3q+4)$ & $H+5$ &  \\
6 & S & $(H+8)-2$ & $H+6$ & kept \\
7 & D & $(8q+8)/2$ & $P+2$ & kept \\
8 & A & $T+1$ & $T+1$ & kept \\
9 & M & $2\times(3q+5)$ & $H+7$ &  \\
\bottomrule
\end{tabular}
\end{minipage}\hfill
\begin{minipage}[t]{0.48\textwidth}
\centering
\parbox{\linewidth}{\centering \textbf{Offset} $c = 2$ (deletes one round; 6 moves)}\par\smallskip
\small
\setlength{\tabcolsep}{3.5pt}
\begin{tabular}{rclll}
\toprule
\# & Op & Calculation & Result & From \\
\midrule
1 & M & $3\times(2q+2)$ & $H+3$ &  \\
2 & S & $(H+3)-1$ & $H+2$ & line 1 \\
3 & D & $(8q+6)/2$ & $P+1$ & kept \\
4 & A & $(H+3)+1$ & $H+4$ & line 1 \\
5 & M & $4\times(3q+2)$ & $T+2$ &  \\
6 & S & $(T+2)-1$ & $T+1$ & line 5 \\
\bottomrule
\end{tabular}
\end{minipage}

\bigskip

\noindent
\begin{minipage}[t]{0.48\textwidth}
\centering
\parbox{\linewidth}{\centering \textbf{Offset} $c = 4$ (deletes one round; 8 moves)}\par\smallskip
\small
\setlength{\tabcolsep}{3.5pt}
\begin{tabular}{rclll}
\toprule
\# & Op & Calculation & Result & From \\
\midrule
1 & M & $3\times(2q+2)$ & $H+3$ &  \\
2 & S & $(H+5)-1$ & $H+4$ & kept \\
3 & D & $(8q+6)/2$ & $P+1$ & kept \\
4 & A & $T+4$ & $T+4$ & kept \\
5 & M & $4\times(3q+2)$ & $T+2$ &  \\
6 & S & $(T+2)-1$ & $T+1$ & line 5 \\
7 & D & $(T+4)/2$ & $H+2$ & line 4 \\
8 & A & $(T+2)+1$ & $T+3$ & line 5 \\
\bottomrule
\end{tabular}
\end{minipage}\hfill
\begin{minipage}[t]{0.48\textwidth}
\centering
\parbox{\linewidth}{\centering \textbf{Offset} $c = 5$ (deletes one round; 9 moves)}\par\smallskip
\small
\setlength{\tabcolsep}{3.5pt}
\begin{tabular}{rclll}
\toprule
\# & Op & Calculation & Result & From \\
\midrule
1 & M & $3\times(2q+2)$ & $H+3$ &  \\
2 & S & $(H+5)-1$ & $H+4$ & kept \\
3 & D & $(8q+6)/2$ & $P+1$ & kept \\
4 & A & $T+4$ & $T+4$ & kept \\
5 & M & $4\times(3q+2)$ & $T+2$ &  \\
6 & S & $(T+2)-1$ & $T+1$ & line 5 \\
7 & D & $(T+4)/2$ & $H+2$ & line 4 \\
8 & A & $(T+4)+1$ & $T+5$ & line 4 \\
9 & M & $3\times(P+1)$ & $T+3$ & line 3 \\
\bottomrule
\end{tabular}
\end{minipage}

\bigskip

\noindent
\begin{minipage}[t]{0.48\textwidth}
\centering
\parbox{\linewidth}{\centering \textbf{Offset} $c = 6$ (deletes one round; 10 moves)}\par\smallskip
\small
\setlength{\tabcolsep}{3.5pt}
\begin{tabular}{rclll}
\toprule
\# & Op & Calculation & Result & From \\
\midrule
1 & M & $3\times(2q+2)$ & $H+3$ &  \\
2 & S & $(H+5)-1$ & $H+4$ & kept \\
3 & D & $(8q+6)/2$ & $P+1$ & kept \\
4 & A & $T+1$ & $T+1$ & kept \\
5 & M & $6\times(2q+2)$ & $T+6$ &  \\
6 & S & $(T+6)-2$ & $T+4$ & line 5 \\
7 & D & $(T+4)/2$ & $H+2$ & line 6 \\
8 & A & $(T+1)+1$ & $T+2$ & line 4 \\
9 & M & $3\times(P+1)$ & $T+3$ & line 3 \\
10 & S & $(T+6)-1$ & $T+5$ & line 5 \\
\bottomrule
\end{tabular}
\end{minipage}\hfill
\begin{minipage}[t]{0.48\textwidth}
\centering
\parbox{\linewidth}{\centering \textbf{Offset} $c = 7$ (deletes one round; 11 moves)}\par\smallskip
\small
\setlength{\tabcolsep}{3.5pt}
\begin{tabular}{rclll}
\toprule
\# & Op & Calculation & Result & From \\
\midrule
1 & M & $4\times(3q+2)$ & $T+2$ &  \\
2 & S & $(H+5)-1$ & $H+4$ & kept \\
3 & D & $(8q+6)/2$ & $P+1$ & kept \\
4 & A & $T+1$ & $T+1$ & kept \\
5 & M & $6\times(2q+2)$ & $T+6$ &  \\
6 & S & $(T+6)-2$ & $T+4$ & line 5 \\
7 & D & $(T+4)/2$ & $H+2$ & line 6 \\
8 & A & $(T+6)+1$ & $T+7$ & line 5 \\
9 & M & $3\times(P+1)$ & $T+3$ & line 3 \\
10 & S & $(T+6)-1$ & $T+5$ & line 5 \\
11 & D & $(T+6)/2$ & $H+3$ & line 5 \\
\bottomrule
\end{tabular}
\end{minipage}

\bigskip

\noindent
\begin{minipage}[t]{0.48\textwidth}
\centering
\parbox{\linewidth}{\centering \textbf{Offset} $c = 8$ (deletes one round; 12 moves)}\par\smallskip
\small
\setlength{\tabcolsep}{3.5pt}
\begin{tabular}{rclll}
\toprule
\# & Op & Calculation & Result & From \\
\midrule
1 & M & $4\times(3q+2)$ & $T+2$ &  \\
2 & S & $(H+5)-3$ & $H+2$ & kept \\
3 & D & $(8q+6)/2$ & $P+1$ & kept \\
4 & A & $T+1$ & $T+1$ & kept \\
5 & M & $6\times(2q+2)$ & $T+6$ &  \\
6 & S & $(T+6)-3$ & $T+3$ & line 5 \\
7 & D & $(T+6)/2$ & $H+3$ & line 5 \\
8 & A & $(T+6)+2$ & $T+8$ & line 5 \\
9 & M & $2\times(H+2)$ & $T+4$ & line 2 \\
10 & S & $(T+6)-1$ & $T+5$ & line 5 \\
11 & D & $(T+8)/2$ & $H+4$ & line 8 \\
12 & A & $(T+6)+1$ & $T+7$ & line 5 \\
\bottomrule
\end{tabular}
\end{minipage}\hfill
\begin{minipage}[t]{0.48\textwidth}
\centering
\parbox{\linewidth}{\centering \textbf{Offset} $c = 9$ (deletes one round; 13 moves)}\par\smallskip
\small
\setlength{\tabcolsep}{3.5pt}
\begin{tabular}{rclll}
\toprule
\# & Op & Calculation & Result & From \\
\midrule
1 & M & $4\times(3q+2)$ & $T+2$ &  \\
2 & S & $(H+5)-3$ & $H+2$ & kept \\
3 & D & $(8q+6)/2$ & $P+1$ & kept \\
4 & A & $T+1$ & $T+1$ & kept \\
5 & M & $6\times(2q+2)$ & $T+6$ &  \\
6 & S & $(T+6)-3$ & $T+3$ & line 5 \\
7 & D & $(T+6)/2$ & $H+3$ & line 5 \\
8 & A & $(T+3)+2$ & $T+5$ & line 6 \\
9 & M & $3\times(P+3)$ & $T+9$ & kept \\
10 & S & $(T+9)-1$ & $T+8$ & line 9 \\
11 & D & $(T+8)/2$ & $H+4$ & line 10 \\
12 & A & $(T+6)+1$ & $T+7$ & line 5 \\
13 & M & $2\times(H+2)$ & $T+4$ & line 2 \\
\bottomrule
\end{tabular}
\end{minipage}

\bigskip

\noindent
\begin{minipage}[t]{0.48\textwidth}
\centering
\parbox{\linewidth}{\centering \textbf{Offset} $c = 10$ (deletes one round; 14 moves)}\par\smallskip
\small
\setlength{\tabcolsep}{3.5pt}
\begin{tabular}{rclll}
\toprule
\# & Op & Calculation & Result & From \\
\midrule
1 & M & $4\times(3q+2)$ & $T+2$ &  \\
2 & S & $(H+5)-3$ & $H+2$ & kept \\
3 & D & $(8q+6)/2$ & $P+1$ & kept \\
4 & A & $T+1$ & $T+1$ & kept \\
5 & M & $6\times(2q+2)$ & $T+6$ &  \\
6 & S & $(T+6)-3$ & $T+3$ & line 5 \\
7 & D & $(T+6)/2$ & $H+3$ & line 5 \\
8 & A & $(T+3)+1$ & $T+4$ & line 6 \\
9 & M & $3\times(P+3)$ & $T+9$ & kept \\
10 & S & $(T+9)-1$ & $T+8$ & line 9 \\
11 & D & $(T+8)/2$ & $H+4$ & line 10 \\
12 & A & $(T+4)+1$ & $T+5$ & line 8 \\
13 & M & $4\times(3q+4)$ & $T+10$ &  \\
14 & S & $(T+8)-1$ & $T+7$ & line 10 \\
\bottomrule
\end{tabular}
\end{minipage}\hfill
\begin{minipage}[t]{0.48\textwidth}
\centering
\parbox{\linewidth}{\centering \textbf{Offset} $c = 11$ (deletes one round; 15 moves)}\par\smallskip
\small
\setlength{\tabcolsep}{3.5pt}
\begin{tabular}{rclll}
\toprule
\# & Op & Calculation & Result & From \\
\midrule
1 & M & $4\times(3q+2)$ & $T+2$ &  \\
2 & S & $(T+2)-1$ & $T+1$ & line 1 \\
3 & D & $(8q+6)/2$ & $P+1$ & kept \\
4 & A & $(T+2)+1$ & $T+3$ & line 1 \\
5 & M & $6\times(2q+2)$ & $T+6$ &  \\
6 & S & $(T+6)-2$ & $T+4$ & line 5 \\
7 & D & $(T+4)/2$ & $H+2$ & line 6 \\
8 & A & $(T+4)+1$ & $T+5$ & line 6 \\
9 & M & $3\times(P+3)$ & $T+9$ & kept \\
10 & S & $(T+9)-2$ & $T+7$ & line 9 \\
11 & D & $(T+6)/2$ & $H+3$ & line 5 \\
12 & A & $(T+9)+2$ & $T+11$ & line 9 \\
13 & M & $4\times(3q+4)$ & $T+10$ &  \\
14 & S & $(T+9)-1$ & $T+8$ & line 9 \\
15 & D & $(T+8)/2$ & $H+4$ & line 14 \\
\bottomrule
\end{tabular}
\end{minipage}

\bigskip

\begin{center}
\begin{minipage}[t]{0.48\textwidth}
\centering
\parbox{\linewidth}{\centering \textbf{Offset} $c = 15$ (deletes two rounds; 23 moves)}\par\smallskip
\small
\setlength{\tabcolsep}{3.5pt}
\begin{tabular}{rclll}
\toprule
\# & Op & Calculation & Result & From \\
\midrule
1 & M & $3\times(P+5)$ & $T+15$ & kept \\
2 & S & $(T+15)-1$ & $T+14$ & line 1 \\
3 & D & $(T+14)/2$ & $H+7$ & line 2 \\
4 & A & $(H+1)+3$ & $H+4$ & kept \\
5 & M & $6\times(2q+3)$ & $T+12$ &  \\
6 & S & $(T+12)-2$ & $T+10$ & line 5 \\
7 & D & $(T+12)/2$ & $H+6$ & line 5 \\
8 & A & $(H+1)+1$ & $H+2$ & kept \\
9 & M & $3\times(P+3)$ & $T+9$ & kept \\
10 & S & $(T+9)-3$ & $T+6$ & line 9 \\
11 & D & $(T+10)/2$ & $H+5$ & line 6 \\
12 & A & $(T+12)+1$ & $T+13$ & line 5 \\
13 & M & $2\times(H+4)$ & $T+8$ & line 4 \\
14 & S & $(T+12)-1$ & $T+11$ & line 5 \\
15 & D & $(T+6)/2$ & $H+3$ & line 10 \\
16 & A & $(T+6)+1$ & $T+7$ & line 10 \\
17 & M & $2\times(H+2)$ & $T+4$ & line 8 \\
18 & S & $(T+6)-1$ & $T+5$ & line 10 \\
19 & D & $(8q+8)/2$ & $P+2$ & kept \\
20 & A & $T+3$ & $T+3$ & kept \\
21 & M & $4\times(3q+2)$ & $T+2$ &  \\
22 & S & $(T+2)-1$ & $T+1$ & line 21 \\
23 & D & $(8q+6)/2$ & $P+1$ & kept$^{*}$ \\
\bottomrule
\end{tabular}
\par\smallskip
\parbox{\linewidth}{\footnotesize $^{*}$Here $8q+6 = 2P+2$. For $q \ge 3$ this number is kept; for $q = 2$ (that is, $P = 10$) it equals $H+7$, which line~3 of this patch makes first. Either way it is available.}
\end{minipage}
\end{center}

\section{Why the Patches Work for Every \texorpdfstring{$q$}{q}}\label{app:proof}

Every number in a patch is a linear expression in $q$, so each line of a patch table is an identity in $q$. We now check that every patch is legal for all $q$ at once, not only for the values tested by computer.

\begin{lemma}\label{lem:patch}
Let $P = 4q + 2$, and let either $c \in \{2, 4, 5, 6, 7, 8, 9, 10, 11, 15\}$ and $q \ge 2$, or $c = 1$ and $q \ge 3$. If $P$ has a perfect sequence from 2, then the tripling construction for $P$, with its final round (or final two rounds) replaced by the patch for $c$, is a perfect sequence for $3P + c$.
\end{lemma}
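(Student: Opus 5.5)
The plan is to verify the patched sequence one patch at a time, treating every patch as a finite list of identities in $q$. By hypothesis we have a perfect sequence for $P$. The kept part of the tripling construction (rounds $0$ to $P/2-2$, or to $P/2-3$ when two rounds are deleted) is legal by the argument behind Lemma~\ref{lem:tripling}. Every move in it uses operands in $\{1,\dots,P\}$ or numbers made in earlier kept rounds. The one concern is that the swap of rounds 1 and 2 and the divisions of later rounds never use numbers from the deleted rounds. This holds because each division in round $j$ halves a number made in a round $k\le j$ (Table~\ref{tab:divisions}), so deleting a suffix of rounds removes no needed operand.

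First, for each $c$ we record the set $K$ of numbers available when the patch starts. This is $\{1,\dots,3P\}$ minus $\{P+1,H+2,H+3,H+4\}$, and also minus $\{P+2,H+5,H+6,H+7\}$ when $d=2$. To see that the deleted rounds make exactly these numbers, read off rounds $P/2-1$ and $P/2-2$ from Section~\ref{sec:upperY}. We have $C_{P/2-1}=H+3$, its S and A moves give $H+2,H+4$, and its division gives $H-(P/2-1)=P+1$; the analogous computation handles round $P/2-2$. Since the tripling moves run in complete 4-move rounds starting on M, the patch also begins on M, which matches the tables.

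Second, each line is checked for five things:
\begin{itemize}
\item the operation matches the AMSD schedule;
\item each operand either lies in $\{1,\dots,P\}$ or is labelled ``kept'' and lies in $K$, or comes from an earlier line;
\item each division is exact;
\item each result is a linear expression in $q$ equal to the stated value;
\item the results, taken together, are exactly the deleted numbers together with $T+1,\dots,T+c$, each made once.
\end{itemize}
Membership in $\{1,\dots,P\}$ is a linear inequality such as $3q+5\le 4q+2$, which holds for $q\ge 3$. Membership in $K$ means checking two things: that a kept value such as $H+8$, $T+4$, $8q+6$, $P+3$ or $P+5$ lies in $[1,3P]$, and that it differs from every deleted number as a polynomial in $q$. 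Once the multiset of results equals the target set, the move count is $(P-1)+(2P-4d)+(4d+c)=3P+c-1$, so the sequence is perfect.

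The main obstacle is the set of linear coincidences at small $q$, where an expression that is generically kept collides with a deleted number or leaves the allowed range. The case singled out in the $c=15$ footnote is one example: $8q+6=H+7$ when $q=2$. I would find all such collisions systematically by solving each equation ``kept operand $=$ deleted number'' and ``factor $\le P$'' for $q$, which leaves only finitely many exceptional $q$. The $c=1$ patch needs $2(3q+5)\le P$ type factors $3q+5\le 4q+2$, and this is exactly what forces $q\ge3$. For each remaining exceptional $(c,q)$ I would either show that the operand is made earlier in the patch, as in the footnote, or check that case directly. After that, all lines are identities valid for every $q$ in range.
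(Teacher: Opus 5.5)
Your proposal matches the paper's proof essentially step for step: compute the removed set $R$ and the available set $K=\{1,\dots,3P\}\setminus R$, then check each patch line as an identity in $q$ (schedule, exact divisions, operand availability, results equal to $R\cup\{T+1,\dots,T+c\}$, move count). The paper also carries out the small-$q$ collision check that you defer, and finds that the only coincidence for $q\ge 2$ is $8q+6=2P+2=H+7$ at $q=2$, which matters only for the two-round patches: $c=1$ is used only with $q\ge 3$, and the $c=15$ patch makes $H+7$ at line 3 before line 23 needs it.

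One slip to fix: $T+4$ is not a kept operand, since it exceeds $3P$. In the $c=4$ and $c=5$ patches that entry means the kept number $T$ plus the constant $4$, so the check there is just $T\in K$.
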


\begin{proof}
Let $d \in \{1, 2\}$ be the number of rounds the patch deletes, and let $R$ be the set of numbers those rounds made (the removed numbers).

The final round of the tripling construction is $j = P/2 - 1 = 2q$. Its multiplication gives $C_{2q} = 3(P - 2q) = 6q + 6 = H + 3$, so its subtraction and addition give $H + 2$ and $H + 4$, and its division gives $H - 2q = 4q + 3 = P + 1$. Since $2q \ge 4$, this is not one of the swapped rounds 1 and 2. In the same way, round $2q - 1$, which is at least 3, makes $H + 6$, $H + 5$, $H + 7$ and $H - (2q - 1) = P + 2$. Hence
\begin{align*}
  R &= \{P + 1,\ H + 2,\ H + 3,\ H + 4\} && (d = 1),\\
  R &= \{P + 1,\ P + 2,\ H + 2, \dots, H + 7\} && (d = 2).
\end{align*}
Tripling makes each number of $\{1, \dots, 3P\}$ exactly once, and the deleted rounds are the last ones, so no kept move uses a number from $R$. When the patch starts, the numbers available are therefore exactly
\[
  K = \{1, \dots, 3P\} \setminus R.
\]
A round begins on M, so the patch also begins on M.

\paragraph{}
Each table lists its operations in the order $\op{M}, \op{S}, \op{D}, \op{A}, \op{M}, \dots$. Expanding each calculation in terms of $q$ gives the stated result, for example $3 \times (2q + 2) = 6q + 6 = H + 3$ and $4 \times (3q + 2) = 12q + 8 = T + 2$. These are identities, so they hold for every $q$.

\paragraph{}
Every division is by 2, and every numerator is one of $8q + 6$, $8q + 8$ or $T + 2i$ with $2 \le i \le 7$. All of these are even for every $q$, since $T = 12q + 6$ is even.

\paragraph{}
The Result column of each patch lists the numbers of $R$ together with $T + 1, \dots, T + c$, each once. For $q \ge 1$ these are all different, since
\[
  P + 2 = 4q + 4 < 6q + 5 = H + 2 \qquad\text{and}\qquad H + 7 = 6q + 10 < 12q + 7 = T + 1.
\]
None of them is available before it is made, because the numbers of $R$ were deleted and $T + i > 3P$. So every move makes a new number of $\{1, \dots, 3P + c\}$.

\paragraph{}
An operand marked \emph{line $i$} is made earlier in the same patch. The other operands are of two kinds.
\begin{enumerate}
  \item \emph{Blank entries}, where both operands must lie in $\{1, \dots, P\}$. Apart from the constants $2, 3, 4$ and $6$, these operands are $2q + 2$, $2q + 3$, $3q + 2$, $3q + 4$ and $3q + 5$. The first four are at most $4q + 2 = P$ when $q \ge 2$. The last satisfies $3q + 5 \le P$ only when $q \ge 3$, and it appears only in line 9 of the patch for $c = 1$. This is why that patch needs $P \ge 14$.
  \item \emph{Kept entries}, where each operand must lie in $K$. Apart from the constants 1 to 4, the kept operands are
  \[
    H + 1, \quad H + 5, \quad H + 8, \quad P + 3, \quad P + 5, \quad 8q + 6, \quad 8q + 8, \quad T,
  \]
  where $8q + 6 = 2P + 2$ and $8q + 8 = 2P + 4$. Each is at most $3P$, so it is enough to check that none lies in $R$. Recall that $R \subseteq \{P + 1, P + 2\} \cup \{H + 2, \dots, H + 7\}$ and that $P + 2 < H$.
  \begin{itemize}
    \item $H + 1$, $H + 8$ and $T$ are outside both ranges.
    \item $H + 5$ is used only by the one-round patches $c = 4, \dots, 10$, where $H + 5 \notin R$.
    \item $P + 3$ and $P + 5$ are above $P + 2$, and below $H + 2$ because $4q + 7 < 6q + 5$ for $q \ge 2$.
    \item $2P + 4 = H + (2q + 5)$ and $2q + 5 \ge 9$, so it is above $H + 7$.
    \item $2P + 2 = H + (2q + 3)$. For $q \ge 3$ we have $2q + 3 \ge 9$, so it is above $H + 7$. For $q = 2$ it equals $H + 7$, which lies in $R$ only for the two-round patches $c = 1$ and $c = 15$. The patch for $c = 1$ is used only with $q \ge 3$, and in the patch for $c = 15$ line 3 makes $H + 7$ before line 23 uses it.
  \end{itemize}
\end{enumerate}

\paragraph{}
The patch has $4d + c$ moves and makes the $4d$ numbers of $R$ together with $c$ new ones. The whole sequence therefore has $(P - 1) + 2P + c = 3P + c - 1$ moves, and by the checks above it makes every number of $\{1, \dots, 3P + c\}$ exactly once. So it is perfect.
\end{proof}

Lemma~\ref{lem:patch} covers the patches for $c = 4$ and $c = 8$ used in Lemma~\ref{lem:everyP}, where $P' = 4r + 2$ with $r \ge 2$, and all the patches used in Theorem~\ref{thm:large}.

\section{Small Cases}\label{app:small}

All start from 2 and follow AMSD, reading down each column. Each was checked move by move.

\par\medskip
{\small\noindent
\makebox[\textwidth][c]{%
\makebox[0.2\textwidth][c]{%
\begin{tabular}[t]{@{}r@{\,}c@{\,}l@{\,}c@{\,}l@{}}
\multicolumn{5}{@{}c@{}}{$N = 11$} \\
$2$ & $+$ & $2$ & $=$ & $4$ \\
$2$ & $\times$ & $4$ & $=$ & $8$ \\
$8$ & $-$ & $2$ & $=$ & $6$ \\
$6$ & $/$ & $2$ & $=$ & $3$ \\
$4$ & $+$ & $3$ & $=$ & $7$ \\
$3$ & $\times$ & $3$ & $=$ & $9$ \\
$8$ & $-$ & $3$ & $=$ & $5$ \\
$2$ & $/$ & $2$ & $=$ & $1$ \\
$2$ & $+$ & $9$ & $=$ & $11$ \\
$2$ & $\times$ & $5$ & $=$ & $10$ \\
\end{tabular}}%
\makebox[0.2\textwidth][c]{%
\begin{tabular}[t]{@{}r@{\,}c@{\,}l@{\,}c@{\,}l@{}}
\multicolumn{5}{@{}c@{}}{$N = 12$} \\
$2$ & $+$ & $2$ & $=$ & $4$ \\
$2$ & $\times$ & $4$ & $=$ & $8$ \\
$8$ & $-$ & $2$ & $=$ & $6$ \\
$2$ & $/$ & $2$ & $=$ & $1$ \\
$2$ & $+$ & $8$ & $=$ & $10$ \\
$2$ & $\times$ & $6$ & $=$ & $12$ \\
$4$ & $-$ & $1$ & $=$ & $3$ \\
$10$ & $/$ & $2$ & $=$ & $5$ \\
$2$ & $+$ & $5$ & $=$ & $7$ \\
$3$ & $\times$ & $3$ & $=$ & $9$ \\
$12$ & $-$ & $1$ & $=$ & $11$ \\
\end{tabular}}%
\makebox[0.2\textwidth][c]{%
\begin{tabular}[t]{@{}r@{\,}c@{\,}l@{\,}c@{\,}l@{}}
\multicolumn{5}{@{}c@{}}{$N = 13$} \\
$2$ & $+$ & $2$ & $=$ & $4$ \\
$2$ & $\times$ & $4$ & $=$ & $8$ \\
$8$ & $-$ & $2$ & $=$ & $6$ \\
$6$ & $/$ & $2$ & $=$ & $3$ \\
$8$ & $+$ & $3$ & $=$ & $11$ \\
$2$ & $\times$ & $6$ & $=$ & $12$ \\
$12$ & $-$ & $2$ & $=$ & $10$ \\
$10$ & $/$ & $2$ & $=$ & $5$ \\
$2$ & $+$ & $11$ & $=$ & $13$ \\
$3$ & $\times$ & $3$ & $=$ & $9$ \\
$11$ & $-$ & $4$ & $=$ & $7$ \\
$2$ & $/$ & $2$ & $=$ & $1$ \\
\end{tabular}}%
\makebox[0.2\textwidth][c]{%
\begin{tabular}[t]{@{}r@{\,}c@{\,}l@{\,}c@{\,}l@{}}
\multicolumn{5}{@{}c@{}}{$N = 15$} \\
$2$ & $+$ & $2$ & $=$ & $4$ \\
$2$ & $\times$ & $4$ & $=$ & $8$ \\
$8$ & $-$ & $2$ & $=$ & $6$ \\
$2$ & $/$ & $2$ & $=$ & $1$ \\
$8$ & $+$ & $6$ & $=$ & $14$ \\
$2$ & $\times$ & $6$ & $=$ & $12$ \\
$14$ & $-$ & $1$ & $=$ & $13$ \\
$14$ & $/$ & $2$ & $=$ & $7$ \\
$2$ & $+$ & $1$ & $=$ & $3$ \\
$3$ & $\times$ & $3$ & $=$ & $9$ \\
$14$ & $-$ & $4$ & $=$ & $10$ \\
$10$ & $/$ & $2$ & $=$ & $5$ \\
$2$ & $+$ & $9$ & $=$ & $11$ \\
$3$ & $\times$ & $5$ & $=$ & $15$ \\
\end{tabular}}%
\makebox[0.2\textwidth][c]{%
\begin{tabular}[t]{@{}r@{\,}c@{\,}l@{\,}c@{\,}l@{}}
\multicolumn{5}{@{}c@{}}{$N = 16$} \\
$2$ & $+$ & $2$ & $=$ & $4$ \\
$4$ & $\times$ & $4$ & $=$ & $16$ \\
$16$ & $-$ & $2$ & $=$ & $14$ \\
$14$ & $/$ & $2$ & $=$ & $7$ \\
$2$ & $+$ & $4$ & $=$ & $6$ \\
$2$ & $\times$ & $4$ & $=$ & $8$ \\
$7$ & $-$ & $2$ & $=$ & $5$ \\
$2$ & $/$ & $2$ & $=$ & $1$ \\
$7$ & $+$ & $6$ & $=$ & $13$ \\
$2$ & $\times$ & $6$ & $=$ & $12$ \\
$16$ & $-$ & $7$ & $=$ & $9$ \\
$6$ & $/$ & $2$ & $=$ & $3$ \\
$2$ & $+$ & $9$ & $=$ & $11$ \\
$5$ & $\times$ & $3$ & $=$ & $15$ \\
$16$ & $-$ & $6$ & $=$ & $10$ \\
\end{tabular}}%
}
\par}\smallskip

\par\medskip
{\small\noindent
\makebox[\textwidth][c]{%
\makebox[0.2\textwidth][c]{%
\begin{tabular}[t]{@{}r@{\,}c@{\,}l@{\,}c@{\,}l@{}}
\multicolumn{5}{@{}c@{}}{$N = 17$} \\
$2$ & $+$ & $2$ & $=$ & $4$ \\
$4$ & $\times$ & $4$ & $=$ & $16$ \\
$16$ & $-$ & $2$ & $=$ & $14$ \\
$14$ & $/$ & $2$ & $=$ & $7$ \\
$4$ & $+$ & $7$ & $=$ & $11$ \\
$2$ & $\times$ & $4$ & $=$ & $8$ \\
$14$ & $-$ & $11$ & $=$ & $3$ \\
$2$ & $/$ & $2$ & $=$ & $1$ \\
$16$ & $+$ & $1$ & $=$ & $17$ \\
$3$ & $\times$ & $3$ & $=$ & $9$ \\
$16$ & $-$ & $1$ & $=$ & $15$ \\
$15$ & $/$ & $3$ & $=$ & $5$ \\
$4$ & $+$ & $8$ & $=$ & $12$ \\
$2$ & $\times$ & $5$ & $=$ & $10$ \\
$16$ & $-$ & $3$ & $=$ & $13$ \\
$12$ & $/$ & $2$ & $=$ & $6$ \\
\end{tabular}}%
\makebox[0.2\textwidth][c]{%
\begin{tabular}[t]{@{}r@{\,}c@{\,}l@{\,}c@{\,}l@{}}
\multicolumn{5}{@{}c@{}}{$N = 19$} \\
$2$ & $+$ & $2$ & $=$ & $4$ \\
$4$ & $\times$ & $4$ & $=$ & $16$ \\
$16$ & $-$ & $2$ & $=$ & $14$ \\
$2$ & $/$ & $2$ & $=$ & $1$ \\
$2$ & $+$ & $1$ & $=$ & $3$ \\
$3$ & $\times$ & $3$ & $=$ & $9$ \\
$16$ & $-$ & $9$ & $=$ & $7$ \\
$16$ & $/$ & $2$ & $=$ & $8$ \\
$16$ & $+$ & $1$ & $=$ & $17$ \\
$4$ & $\times$ & $3$ & $=$ & $12$ \\
$14$ & $-$ & $3$ & $=$ & $11$ \\
$12$ & $/$ & $2$ & $=$ & $6$ \\
$2$ & $+$ & $17$ & $=$ & $19$ \\
$2$ & $\times$ & $9$ & $=$ & $18$ \\
$16$ & $-$ & $1$ & $=$ & $15$ \\
$15$ & $/$ & $3$ & $=$ & $5$ \\
$2$ & $+$ & $11$ & $=$ & $13$ \\
$2$ & $\times$ & $5$ & $=$ & $10$ \\
\end{tabular}}%
\makebox[0.2\textwidth][c]{%
\begin{tabular}[t]{@{}r@{\,}c@{\,}l@{\,}c@{\,}l@{}}
\multicolumn{5}{@{}c@{}}{$N = 20$} \\
$2$ & $+$ & $2$ & $=$ & $4$ \\
$4$ & $\times$ & $4$ & $=$ & $16$ \\
$16$ & $-$ & $4$ & $=$ & $12$ \\
$2$ & $/$ & $2$ & $=$ & $1$ \\
$2$ & $+$ & $1$ & $=$ & $3$ \\
$3$ & $\times$ & $3$ & $=$ & $9$ \\
$12$ & $-$ & $1$ & $=$ & $11$ \\
$16$ & $/$ & $2$ & $=$ & $8$ \\
$2$ & $+$ & $11$ & $=$ & $13$ \\
$2$ & $\times$ & $3$ & $=$ & $6$ \\
$16$ & $-$ & $2$ & $=$ & $14$ \\
$14$ & $/$ & $2$ & $=$ & $7$ \\
$4$ & $+$ & $16$ & $=$ & $20$ \\
$2$ & $\times$ & $9$ & $=$ & $18$ \\
$20$ & $-$ & $3$ & $=$ & $17$ \\
$20$ & $/$ & $4$ & $=$ & $5$ \\
$2$ & $+$ & $17$ & $=$ & $19$ \\
$2$ & $\times$ & $5$ & $=$ & $10$ \\
$16$ & $-$ & $1$ & $=$ & $15$ \\
\end{tabular}}%
\makebox[0.2\textwidth][c]{%
\begin{tabular}[t]{@{}r@{\,}c@{\,}l@{\,}c@{\,}l@{}}
\multicolumn{5}{@{}c@{}}{$N = 21$} \\
$2$ & $+$ & $2$ & $=$ & $4$ \\
$4$ & $\times$ & $4$ & $=$ & $16$ \\
$16$ & $-$ & $2$ & $=$ & $14$ \\
$14$ & $/$ & $2$ & $=$ & $7$ \\
$2$ & $+$ & $4$ & $=$ & $6$ \\
$2$ & $\times$ & $4$ & $=$ & $8$ \\
$7$ & $-$ & $6$ & $=$ & $1$ \\
$6$ & $/$ & $2$ & $=$ & $3$ \\
$14$ & $+$ & $1$ & $=$ & $15$ \\
$6$ & $\times$ & $3$ & $=$ & $18$ \\
$16$ & $-$ & $4$ & $=$ & $12$ \\
$15$ & $/$ & $3$ & $=$ & $5$ \\
$7$ & $+$ & $6$ & $=$ & $13$ \\
$4$ & $\times$ & $5$ & $=$ & $20$ \\
$18$ & $-$ & $1$ & $=$ & $17$ \\
$18$ & $/$ & $2$ & $=$ & $9$ \\
$2$ & $+$ & $17$ & $=$ & $19$ \\
$7$ & $\times$ & $3$ & $=$ & $21$ \\
$16$ & $-$ & $5$ & $=$ & $11$ \\
$20$ & $/$ & $2$ & $=$ & $10$ \\
\end{tabular}}%
\makebox[0.2\textwidth][c]{%
\begin{tabular}[t]{@{}r@{\,}c@{\,}l@{\,}c@{\,}l@{}}
\multicolumn{5}{@{}c@{}}{$N = 23$} \\
$2$ & $+$ & $2$ & $=$ & $4$ \\
$4$ & $\times$ & $4$ & $=$ & $16$ \\
$16$ & $-$ & $2$ & $=$ & $14$ \\
$2$ & $/$ & $2$ & $=$ & $1$ \\
$2$ & $+$ & $1$ & $=$ & $3$ \\
$4$ & $\times$ & $3$ & $=$ & $12$ \\
$16$ & $-$ & $1$ & $=$ & $15$ \\
$14$ & $/$ & $2$ & $=$ & $7$ \\
$2$ & $+$ & $3$ & $=$ & $5$ \\
$3$ & $\times$ & $7$ & $=$ & $21$ \\
$16$ & $-$ & $5$ & $=$ & $11$ \\
$16$ & $/$ & $2$ & $=$ & $8$ \\
$14$ & $+$ & $8$ & $=$ & $22$ \\
$3$ & $\times$ & $3$ & $=$ & $9$ \\
$16$ & $-$ & $3$ & $=$ & $13$ \\
$12$ & $/$ & $2$ & $=$ & $6$ \\
$2$ & $+$ & $15$ & $=$ & $17$ \\
$4$ & $\times$ & $5$ & $=$ & $20$ \\
$21$ & $-$ & $2$ & $=$ & $19$ \\
$20$ & $/$ & $2$ & $=$ & $10$ \\
$2$ & $+$ & $21$ & $=$ & $23$ \\
$2$ & $\times$ & $9$ & $=$ & $18$ \\
\end{tabular}}%
}
\par}\smallskip

\par\medskip
{\small\noindent
\makebox[\textwidth][c]{%
\makebox[0.2\textwidth][c]{%
\begin{tabular}[t]{@{}r@{\,}c@{\,}l@{\,}c@{\,}l@{}}
\multicolumn{5}{@{}c@{}}{$N = 24$} \\
$2$ & $+$ & $2$ & $=$ & $4$ \\
$4$ & $\times$ & $4$ & $=$ & $16$ \\
$16$ & $-$ & $4$ & $=$ & $12$ \\
$12$ & $/$ & $2$ & $=$ & $6$ \\
$4$ & $+$ & $6$ & $=$ & $10$ \\
$2$ & $\times$ & $4$ & $=$ & $8$ \\
$16$ & $-$ & $2$ & $=$ & $14$ \\
$2$ & $/$ & $2$ & $=$ & $1$ \\
$4$ & $+$ & $1$ & $=$ & $5$ \\
$2$ & $\times$ & $10$ & $=$ & $20$ \\
$16$ & $-$ & $1$ & $=$ & $15$ \\
$12$ & $/$ & $4$ & $=$ & $3$ \\
$4$ & $+$ & $5$ & $=$ & $9$ \\
$2$ & $\times$ & $9$ & $=$ & $18$ \\
$20$ & $-$ & $3$ & $=$ & $17$ \\
$14$ & $/$ & $2$ & $=$ & $7$ \\
$2$ & $+$ & $20$ & $=$ & $22$ \\
$3$ & $\times$ & $7$ & $=$ & $21$ \\
$20$ & $-$ & $1$ & $=$ & $19$ \\
$22$ & $/$ & $2$ & $=$ & $11$ \\
$2$ & $+$ & $21$ & $=$ & $23$ \\
$2$ & $\times$ & $12$ & $=$ & $24$ \\
$16$ & $-$ & $3$ & $=$ & $13$ \\
\end{tabular}}%
\makebox[0.2\textwidth][c]{%
\begin{tabular}[t]{@{}r@{\,}c@{\,}l@{\,}c@{\,}l@{}}
\multicolumn{5}{@{}c@{}}{$N = 25$} \\
$2$ & $+$ & $2$ & $=$ & $4$ \\
$4$ & $\times$ & $4$ & $=$ & $16$ \\
$16$ & $-$ & $4$ & $=$ & $12$ \\
$12$ & $/$ & $4$ & $=$ & $3$ \\
$2$ & $+$ & $4$ & $=$ & $6$ \\
$3$ & $\times$ & $3$ & $=$ & $9$ \\
$9$ & $-$ & $4$ & $=$ & $5$ \\
$16$ & $/$ & $2$ & $=$ & $8$ \\
$16$ & $+$ & $8$ & $=$ & $24$ \\
$5$ & $\times$ & $5$ & $=$ & $25$ \\
$24$ & $-$ & $2$ & $=$ & $22$ \\
$2$ & $/$ & $2$ & $=$ & $1$ \\
$4$ & $+$ & $16$ & $=$ & $20$ \\
$3$ & $\times$ & $5$ & $=$ & $15$ \\
$16$ & $-$ & $3$ & $=$ & $13$ \\
$20$ & $/$ & $2$ & $=$ & $10$ \\
$3$ & $+$ & $20$ & $=$ & $23$ \\
$2$ & $\times$ & $9$ & $=$ & $18$ \\
$16$ & $-$ & $2$ & $=$ & $14$ \\
$14$ & $/$ & $2$ & $=$ & $7$ \\
$4$ & $+$ & $15$ & $=$ & $19$ \\
$3$ & $\times$ & $7$ & $=$ & $21$ \\
$24$ & $-$ & $7$ & $=$ & $17$ \\
$22$ & $/$ & $2$ & $=$ & $11$ \\
\end{tabular}}%
\makebox[0.2\textwidth][c]{%
\begin{tabular}[t]{@{}r@{\,}c@{\,}l@{\,}c@{\,}l@{}}
\multicolumn{5}{@{}c@{}}{$N = 27$} \\
$2$ & $+$ & $2$ & $=$ & $4$ \\
$2$ & $\times$ & $4$ & $=$ & $8$ \\
$8$ & $-$ & $2$ & $=$ & $6$ \\
$2$ & $/$ & $2$ & $=$ & $1$ \\
$2$ & $+$ & $8$ & $=$ & $10$ \\
$2$ & $\times$ & $6$ & $=$ & $12$ \\
$12$ & $-$ & $1$ & $=$ & $11$ \\
$10$ & $/$ & $2$ & $=$ & $5$ \\
$8$ & $+$ & $11$ & $=$ & $19$ \\
$2$ & $\times$ & $11$ & $=$ & $22$ \\
$19$ & $-$ & $1$ & $=$ & $18$ \\
$6$ & $/$ & $2$ & $=$ & $3$ \\
$4$ & $+$ & $12$ & $=$ & $16$ \\
$5$ & $\times$ & $3$ & $=$ & $15$ \\
$19$ & $-$ & $2$ & $=$ & $17$ \\
$18$ & $/$ & $2$ & $=$ & $9$ \\
$2$ & $+$ & $19$ & $=$ & $21$ \\
$2$ & $\times$ & $10$ & $=$ & $20$ \\
$19$ & $-$ & $5$ & $=$ & $14$ \\
$21$ & $/$ & $3$ & $=$ & $7$ \\
$8$ & $+$ & $19$ & $=$ & $27$ \\
$2$ & $\times$ & $12$ & $=$ & $24$ \\
$27$ & $-$ & $1$ & $=$ & $26$ \\
$26$ & $/$ & $2$ & $=$ & $13$ \\
$2$ & $+$ & $21$ & $=$ & $23$ \\
$5$ & $\times$ & $5$ & $=$ & $25$ \\
\end{tabular}}%
\makebox[0.2\textwidth][c]{%
\begin{tabular}[t]{@{}r@{\,}c@{\,}l@{\,}c@{\,}l@{}}
\multicolumn{5}{@{}c@{}}{$N = 28$} \\
$2$ & $+$ & $2$ & $=$ & $4$ \\
$4$ & $\times$ & $4$ & $=$ & $16$ \\
$16$ & $-$ & $2$ & $=$ & $14$ \\
$14$ & $/$ & $2$ & $=$ & $7$ \\
$4$ & $+$ & $4$ & $=$ & $8$ \\
$2$ & $\times$ & $14$ & $=$ & $28$ \\
$14$ & $-$ & $4$ & $=$ & $10$ \\
$2$ & $/$ & $2$ & $=$ & $1$ \\
$14$ & $+$ & $1$ & $=$ & $15$ \\
$2$ & $\times$ & $10$ & $=$ & $20$ \\
$28$ & $-$ & $10$ & $=$ & $18$ \\
$18$ & $/$ & $2$ & $=$ & $9$ \\
$2$ & $+$ & $10$ & $=$ & $12$ \\
$2$ & $\times$ & $12$ & $=$ & $24$ \\
$28$ & $-$ & $9$ & $=$ & $19$ \\
$12$ & $/$ & $4$ & $=$ & $3$ \\
$2$ & $+$ & $24$ & $=$ & $26$ \\
$9$ & $\times$ & $3$ & $=$ & $27$ \\
$24$ & $-$ & $2$ & $=$ & $22$ \\
$26$ & $/$ & $2$ & $=$ & $13$ \\
$2$ & $+$ & $3$ & $=$ & $5$ \\
$5$ & $\times$ & $5$ & $=$ & $25$ \\
$20$ & $-$ & $3$ & $=$ & $17$ \\
$18$ & $/$ & $3$ & $=$ & $6$ \\
$4$ & $+$ & $19$ & $=$ & $23$ \\
$7$ & $\times$ & $3$ & $=$ & $21$ \\
$16$ & $-$ & $5$ & $=$ & $11$ \\
\end{tabular}}%
\makebox[0.2\textwidth][c]{%
\begin{tabular}[t]{@{}r@{\,}c@{\,}l@{\,}c@{\,}l@{}}
\multicolumn{5}{@{}c@{}}{$N = 29$} \\
$2$ & $+$ & $2$ & $=$ & $4$ \\
$4$ & $\times$ & $4$ & $=$ & $16$ \\
$16$ & $-$ & $2$ & $=$ & $14$ \\
$16$ & $/$ & $2$ & $=$ & $8$ \\
$2$ & $+$ & $8$ & $=$ & $10$ \\
$2$ & $\times$ & $14$ & $=$ & $28$ \\
$28$ & $-$ & $4$ & $=$ & $24$ \\
$10$ & $/$ & $2$ & $=$ & $5$ \\
$10$ & $+$ & $5$ & $=$ & $15$ \\
$5$ & $\times$ & $5$ & $=$ & $25$ \\
$24$ & $-$ & $5$ & $=$ & $19$ \\
$24$ & $/$ & $8$ & $=$ & $3$ \\
$2$ & $+$ & $16$ & $=$ & $18$ \\
$2$ & $\times$ & $3$ & $=$ & $6$ \\
$25$ & $-$ & $8$ & $=$ & $17$ \\
$24$ & $/$ & $2$ & $=$ & $12$ \\
$8$ & $+$ & $5$ & $=$ & $13$ \\
$2$ & $\times$ & $13$ & $=$ & $26$ \\
$28$ & $-$ & $6$ & $=$ & $22$ \\
$22$ & $/$ & $2$ & $=$ & $11$ \\
$4$ & $+$ & $19$ & $=$ & $23$ \\
$3$ & $\times$ & $3$ & $=$ & $9$ \\
$28$ & $-$ & $8$ & $=$ & $20$ \\
$2$ & $/$ & $2$ & $=$ & $1$ \\
$4$ & $+$ & $25$ & $=$ & $29$ \\
$3$ & $\times$ & $9$ & $=$ & $27$ \\
$24$ & $-$ & $3$ & $=$ & $21$ \\
$14$ & $/$ & $2$ & $=$ & $7$ \\
\end{tabular}}%
}
\par}\smallskip

\par\medskip
{\small\noindent
\makebox[\textwidth][c]{%
\makebox[0.2\textwidth][c]{%
\begin{tabular}[t]{@{}r@{\,}c@{\,}l@{\,}c@{\,}l@{}}
\multicolumn{5}{@{}c@{}}{$N = 31$} \\
$2$ & $+$ & $2$ & $=$ & $4$ \\
$4$ & $\times$ & $4$ & $=$ & $16$ \\
$16$ & $-$ & $2$ & $=$ & $14$ \\
$16$ & $/$ & $2$ & $=$ & $8$ \\
$14$ & $+$ & $8$ & $=$ & $22$ \\
$2$ & $\times$ & $14$ & $=$ & $28$ \\
$16$ & $-$ & $4$ & $=$ & $12$ \\
$12$ & $/$ & $4$ & $=$ & $3$ \\
$2$ & $+$ & $3$ & $=$ & $5$ \\
$3$ & $\times$ & $3$ & $=$ & $9$ \\
$16$ & $-$ & $3$ & $=$ & $13$ \\
$14$ & $/$ & $2$ & $=$ & $7$ \\
$2$ & $+$ & $16$ & $=$ & $18$ \\
$3$ & $\times$ & $7$ & $=$ & $21$ \\
$28$ & $-$ & $3$ & $=$ & $25$ \\
$22$ & $/$ & $2$ & $=$ & $11$ \\
$4$ & $+$ & $13$ & $=$ & $17$ \\
$2$ & $\times$ & $12$ & $=$ & $24$ \\
$28$ & $-$ & $5$ & $=$ & $23$ \\
$12$ & $/$ & $2$ & $=$ & $6$ \\
$2$ & $+$ & $24$ & $=$ & $26$ \\
$5$ & $\times$ & $6$ & $=$ & $30$ \\
$22$ & $-$ & $3$ & $=$ & $19$ \\
$30$ & $/$ & $3$ & $=$ & $10$ \\
$14$ & $+$ & $17$ & $=$ & $31$ \\
$3$ & $\times$ & $9$ & $=$ & $27$ \\
$22$ & $-$ & $7$ & $=$ & $15$ \\
$2$ & $/$ & $2$ & $=$ & $1$ \\
$2$ & $+$ & $27$ & $=$ & $29$ \\
$2$ & $\times$ & $10$ & $=$ & $20$ \\
\end{tabular}}%
\makebox[0.2\textwidth][c]{%
\begin{tabular}[t]{@{}r@{\,}c@{\,}l@{\,}c@{\,}l@{}}
\multicolumn{5}{@{}c@{}}{$N = 33$} \\
$2$ & $+$ & $2$ & $=$ & $4$ \\
$4$ & $\times$ & $4$ & $=$ & $16$ \\
$16$ & $-$ & $2$ & $=$ & $14$ \\
$14$ & $/$ & $2$ & $=$ & $7$ \\
$4$ & $+$ & $7$ & $=$ & $11$ \\
$2$ & $\times$ & $16$ & $=$ & $32$ \\
$16$ & $-$ & $7$ & $=$ & $9$ \\
$2$ & $/$ & $2$ & $=$ & $1$ \\
$32$ & $+$ & $1$ & $=$ & $33$ \\
$2$ & $\times$ & $14$ & $=$ & $28$ \\
$32$ & $-$ & $9$ & $=$ & $23$ \\
$16$ & $/$ & $2$ & $=$ & $8$ \\
$2$ & $+$ & $11$ & $=$ & $13$ \\
$2$ & $\times$ & $9$ & $=$ & $18$ \\
$33$ & $-$ & $4$ & $=$ & $29$ \\
$33$ & $/$ & $11$ & $=$ & $3$ \\
$4$ & $+$ & $13$ & $=$ & $17$ \\
$2$ & $\times$ & $3$ & $=$ & $6$ \\
$32$ & $-$ & $8$ & $=$ & $24$ \\
$24$ & $/$ & $2$ & $=$ & $12$ \\
$2$ & $+$ & $17$ & $=$ & $19$ \\
$2$ & $\times$ & $13$ & $=$ & $26$ \\
$32$ & $-$ & $2$ & $=$ & $30$ \\
$30$ & $/$ & $6$ & $=$ & $5$ \\
$2$ & $+$ & $29$ & $=$ & $31$ \\
$5$ & $\times$ & $5$ & $=$ & $25$ \\
$33$ & $-$ & $11$ & $=$ & $22$ \\
$30$ & $/$ & $3$ & $=$ & $10$ \\
$2$ & $+$ & $19$ & $=$ & $21$ \\
$9$ & $\times$ & $3$ & $=$ & $27$ \\
$32$ & $-$ & $12$ & $=$ & $20$ \\
$30$ & $/$ & $2$ & $=$ & $15$ \\
\end{tabular}}%
}
\par}\smallskip


\begin{thebibliography}{9}

\bibitem{DLS}
P. Downey, B. Leong and R. Sethi, Computing sequences with addition chains, \emph{SIAM J. Comput.} \textbf{10}(3) (1981), 638--646.

\bibitem{BN}
H. M. Bahig and D. I. Nassr, Generating a shortest $B$-chain using multi-GPUs, \emph{Inf. Sci. Lett.} \textbf{11}(3) (2022), 745--750.

\bibitem{Knuth}
D. E. Knuth, \emph{The Art of Computer Programming, Vol.~2: Seminumerical Algorithms}, 3rd ed., Addison-Wesley, 1997, $\S$4.6.3.

\bibitem{OEIS}
OEIS Foundation, Sequence A005245 (integer complexity), \url{https://oeis.org/A005245}.

\end{thebibliography}
\end{document}